\documentclass[11pt]{article}

\usepackage[a4paper,margin=1in]{geometry}  % 1-inch margins all around

\usepackage{amsmath}
\usepackage{amssymb}
\usepackage{bm}
\usepackage{graphicx}
\usepackage{booktabs}
\usepackage{multirow}
\usepackage{caption}
\usepackage{subcaption}
\usepackage{soul}
\usepackage{tikz}
\usepackage{authblk}   % optional, for nicer affiliations
\usepackage{hyperref}

\usetikzlibrary{arrows.meta, decorations.pathmorphing, calc, positioning}

\newcommand{\cred}[1]{{\color{red}  #1}}
\newcommand{\cblue}[1]{{\color{blue}  #1}}

\newtheorem{definition}{Definition}[section]
\newtheorem{theorem}{Theorem}[section]
\newtheorem{proposition}{Proposition}[section]
\newtheorem{remark}{Remark}[section]

\title{Analysis of eigenvalue clustering leads to optimal scaling in numerical radiative transfer}

\author[1,2]{Pietro Benedusi\thanks{Corresponding author: \href{mailto:benedp@usi.ch}{benedp@usi.ch}}}
\author[1,2]{Simone Riva}
\author[1,2]{Luca Belluzzi}
\author[3,4]{Stefano Serra-Capizzano}

\affil[1]{Istituto ricerche solari Aldo e Cele Dacc\`o (IRSOL), Universit\`a della Svizzera italiana, Locarno, Switzerland}
\affil[2]{Euler Institute, Universit\`a della Svizzera italiana, Lugano, Switzerland}
\affil[3]{Insubria University, Como, Italy}
\affil[4]{Uppsala University, Uppsala, Sweden}
\begin{document}
\maketitle

\begin{abstract}
We consider a multidimensional polychromatic radiative transfer (RT) problem, accounting for scattering processes in a general form, i.e. anisotropic (dipole) scattering with partial frequency redistribution.
Given a discrete ordinates ($S_N$) discretization, we report the corresponding matrix structures, depending on model and discretization parameters.
Despite the possibly dense nature of these matrices, the use of Krylov methods is effective (especially in the matrix-free context) and robust. We propose a theoretical analysis, using the spectral tools of the \textit{symbol} theory, explaining why Krylov convergence is robust w.r.t. all the discretization parameters, even in the unpreconditioned case.
In fact, the compactness of the continuous operators used in the modeling leads to zero-clustered dense matrix sequences plus identity, so that the clustering at the unity of the spectra is deduced. Numerical experiments confirm the theoretical results, which have a direct application, for example, in the simulation of radiative transfer in stellar atmospheres, a key problem in astrophysical research. In general, we demonstrate that optimal scaling  with respect to RT discretization parameters is expected for Krylov solution strategies.
\end{abstract}

\section{Introduction}
Radiative transfer (RT) describes how electromagnetic radiation is altered, through absorption, emission, and scattering, as it propagates through a participating medium.
It plays a fundamental role in numerous scientific and engineering fields, including astrophysics, climate science, heat transfer, nuclear engineering, biomedical imaging, and materials science.
More specifically, RT is central to remote sensing applications, where it enables the interpretation of information encoded in electromagnetic radiation collected by observational instruments.
When considering systems out of local thermodynamic equilibrium, the RT problem consists in finding a self-consistent solution of two sets of equations describing the interaction between matter and radiation: the RT equation for the radiation field and the statistical equilibrium (SE) equations for the medium \cite{landi2016}.
The resulting RT problem is generally nonlocal and nonlinear, is integro-differential by nature, and can be highly coupled and high-dimensional (up to seven dimensions in the most general time-dependent case). 
As a result, its numerical solution in realistic scenarios is often challenging, requiring the use of efficient numerical methods.

Numerical algorithms for radiative transfer can be broadly classified as stochastic (Monte Carlo, cf. \cite{noebauer2019monte}) or deterministic. Among deterministic approaches, the discrete ordinates method (DOM), also known as the $S_N$ method, is arguably the most extensively studied and widely used technique for angular discretization when both accuracy and computational efficiency are required \cite{balsara2001fast}. In the DOM framework, the angular domain is discretized into a finite set of directions, or \textit{ordinates}, along which the RT equation is solved using an appropriate spatial discretization.
In this work, we do not consider other angular discretizations, such as the spherical harmonics method, which has different mathematical properties and numerical challenges.
%In particular, the complexity of the $P_N$ method increases rapidly with dimensionality and expansion order, requiring a careful selection for optimal performance and accuracy.
Concerning the spatial discretization, the RT equation can be discretized using the finite difference method or using exponential integrators, e.g. in the context of short/long-characteristics methods, or using the finite volume/element methods \cite{bardin2025accelerated,olivier2025consistent}.
Characteristic methods are typically employed on regular, structured meshes and for relatively simple geometries, whereas finite volume (FVM) and finite element (FEM) methods are better suited for complex geometries. The main drawback of the latter methods lies in the cost of assembling discrete systems, particularly in terms of memory usage. This issue becomes especially severe in three-dimensional polychromatic problems, which involve six dimensions, and matrix assembly becomes often prohibitive. In such cases, matrix-free or semi–matrix-free \cite{jolivet2021deterministic} approaches must be adopted.

This work focuses on linear radiative transfer problems. Once discretized, such problems can be efficiently solved using Krylov subspace methods such as preconditioned GMRES or BiCGStab \cite{anusha2009preconditioned,badri2019preconditioned,benedusi2022numerical}. The convergence analysis of these iterative methods is closely related to the spectral clustering of the resulting matrix sequences, which can be formally investigated using tools from the \textit{symbol theory}. In this framework, the ``symbol'' can be interpreted as a function describing the asymptotic eigenvalue or singular value distribution of increasingly large discrete operators obtained via grid refinement. Such an analysis provides precise insight into the spectral properties of the operator sequences and, importantly, is often predictive already at moderate problem sizes, not only in the asymptotic limit.

In particular, for a general linear RT problem, discretized with DOM and a long-characteristics method, we show that Krylov solvers are expected to exhibit robustness with respect to all the discretization parameters in 6D, even in the absence of preconditioning. In this sense, we can refer to a “free lunch”: robustness, and hence optimal scaling with respect to problem size, is essentially obtained for free. Naturally, additional preconditioning can still be employed to further reduce the time to solution \cite{dolz2022robustly}. These findings complement classical results on the spectrum of continuous integral operators, whose compactness implies that zero is the only possible accumulation point of the spectrum.

This work is organized as follows: in Section~\ref{sec:model}, we present a general linear RT model problem. In Section~\ref{sec:disc}, after some simplifying assumptions, we describe its discretization and the corresponding matrix sequences. The extension (back to a more general case) is reported at the end of the section. In  Section~\ref{sec:symbol}, we describe the theoretical tools and present the main results, which are validated in Section~\ref{sec:exp}, via multiple numerical experiments. Finally, in Section~\ref{sec:concl}, we report the concluding remarks.

\section{Model}\label{sec:model}
Given a spatial domain $D\subset \mathbb{R}^d$, a frequency domain $F\subset \mathbb{R}_+$, and $\mathbb{S}^{q}$ the $q$-dimensional unit sphere (typically $q\in\{1,2\}$), we define $\mathcal{D}= D\times \mathbb{S}^{q} \times F$ and consider the polychromatic stationary radiative transfer equation
\begin{equation}\label{eq:RT}
\nabla_{\bm{\Omega}} I(\bm{x},\bm{\Omega},\nu)=-\chi(\bm{x},\bm{\Omega},\nu)I(\bm{x},\bm{\Omega},\nu) + \varepsilon(\bm{x},\bm{\Omega},\nu), \quad \mathrm{for} \quad (\bm{x},\bm{\Omega},\nu)\in \mathcal{D},
\end{equation}
where $\nabla_{\bm{\Omega}}$ is the directional derivative along $\bm{\Omega}$, the unknown $I:\mathcal{D} \to \mathbb{R}_+$ is the radiation intensity, 
%and $\varepsilon:\mathcal{D} \to \mathbb{R}$ is the emissivity field, which reads
%\cluca{$\chi:\mathcal{D} \to \mathbb{R}_+$ is the extinction coefficient (or opacity), and $\varepsilon:\mathcal{D} \to \mathbb{R}_+$ is the emission coefficient.}
while $\chi:\mathcal{D} \to \mathbb{R}_+$ and $\varepsilon:\mathcal{D} \to \mathbb{R}_+$ are the medium's extinction coefficient (or opacity) and emission coefficient, respectively.
%-----------
The RT equation~\eqref{eq:RT} is a linear differential equation, to be closed by suitable boundary conditions, e.g.
\begin{equation}\label{eq:bc}
    I(\bm{x},\bm{\Omega},\nu)=I_{\mathrm{in}}(\bm{x},\nu) \quad \mathrm{for} \quad \bm{x}\in\partial D,\bm{\Omega}\in S_{\Gamma},\nu\in F,
\end{equation}
where $S_{\Gamma}\subset S$ is a set of directions (usually incoming directions in $\partial D$, i.e. satisfying $\bm{\Omega}\cdot\bm{n}<0$, given the outgoing normal $\bm{n}$).
Given a direction $\bm{\Omega}$,  Equation~\eqref{eq:RT} is intrinsically one-dimensional. Introducing the coordinate $s\in[0,L]$ along $\bm{\Omega},$ and fixing $\nu$, we can write \eqref{eq:RT} as
\begin{equation}\label{eq:RTs}
    \frac{\mathrm{d}}{\mathrm{d}s}\,I(s) =-\chi(s) I(s) +\varepsilon(s) .
\end{equation}
% \cluca{Defining the optical depth scale, for the considered direction and frequency, ${\rm d}\tau = -\chi(s) {\rm d}s$ and the source function $S = \varepsilon/\chi$, the RT equation takes the form}
% $$\frac{\mathrm{d}}{\mathrm{d} \tau} \, I(\tau) = I(\tau) - S(\tau).$$ 
In the context of radiative transfer, the procedure to solve Equation~\eqref{eq:RT}, i.e. solving~\eqref{eq:RTs} in each direction, given $\chi$ and $\varepsilon$, is known as {\it formal solution}.
%-------------

The extinction and emission coefficients $\chi$ and $\varepsilon$ depend on the state of the medium. Out of local thermodynamic equilibrium, this state has to be evaluated by solving the SE equations, which account for the interaction of the medium with the radiation field.
%which in turn depends on the radiation field via the statistical equilibrium (SE) equations.
%The extinction coefficient can be written as $\chi(\bm{x},\bm{\Omega},\nu)=[\kappa(\bm{x})+\sigma(\bm{x})]\phi(\bm{x},\bm{\Omega},\nu)$, where $\phi$ is the absorption profile (normalized to unity with respect to frequency, for any $\bm{x}$ and $\bm{\Omega}$), while $\kappa$ and $\sigma$ are the frequency-integrated extinction coefficients for (true) absorption and scattering, respectively.
Hereafter, we will focus on media for which the SE equations have an analytic solution. 
In this case, the emission coefficient %can be explicitly\cluca{[!]} related to the radiation field and 
can be expressed as:
\begin{equation}\label{eq:emissivity}
\varepsilon(\bm{x},\bm{\Omega},\nu)=\frac{\sigma(\bm{x})}{s_d}\oint_{\mathbb{S}^{q}}\int_F\Phi(\bm{x},\bm{\Omega},\bm{\Omega}',\nu,\nu')I(\bm{x},\bm{\Omega}',\nu')\,\mathrm{d}\nu'\mathrm{d}\bm{\Omega}'+\varepsilon_t(\bm{x},\bm{\Omega},\nu),
\end{equation}
where the integral term describes scattering contributions, while $\varepsilon_t:\mathcal{D} \to \mathbb{R}_+$ is a thermal emission source term.
The quantity $\sigma$ is the frequency-integrated extinction coefficient for scattering processes and $s_d\in\mathbb{R}$ is the surface of $\mathbb{S}^{q}$. The scattering kernel $\Phi:D\times \mathbb{S}^{q} \times \mathbb{S}^{q} \times F \times  F \to \mathbb{R}$, also known as redistribution function, encodes the SE equations, and represents the joint probability of a photon $(\bm{\Omega}',\nu')$ to be absorbed and a photon $(\bm{\Omega},\nu)$ to be emitted in a scattering process.
%\cluca{This formalism is very general and allows} considering anisotropic, angle-dependent scattering.
%where $s_d\in\mathbb{R}$ is the surface of $\mathbb{S}^{q}$.
%The physical properties of the domain are encoded in the extinction (or opacity) coefficient $\chi:\mathcal{D} \to \mathbb{R}_+$, also expressed as $\chi(\bm{x},\bm{\Omega},\nu)=[\kappa(\bm{x},\Omega)+\sigma(\bm{x})]\phi(\nu)$, i.e. as the sum of an absorption coefficient ($\kappa$) and a dimensional scattering coefficient $\sigma:D \to \mathbb{R}$, scaled by an absorption profile $\phi:F \to \mathbb{R}_+$.
%
% \begin{equation}
% \frac{1}{s_d\cdot s_d}\oint_{\mathbb{S}^{q}}\int_F\oint_{\mathbb{S}^{q}}\int_F\Phi(\bm{x},\bm{\Omega},\bm{\Omega}',\nu,\nu')\,\mathrm{d}\nu'\mathrm{d}\bm{\Omega}'\mathrm{d}\nu\mathrm{d}\bm{\Omega}=1
% \end{equation}
%
%The process of scattering is described via the scattering kernel $\Phi:D\times \mathbb{S}^{q} \times \mathbb{S}^{q} \times F \times  F \to \mathbb{R}$. Finally, $\varepsilon_t:\mathcal{D} \to \mathbb{R}$ is a source term, for example given by thermal emissivity. We remark that the scattering kernel, also known as redistribution function, represents the probability of a photon $(\bm{\Omega}',\nu')$ to be absorbed and the photon $(\bm{\Omega},\nu)$ to be emitted. Thus, 
Since photons are conserved during scattering, the following normalization must hold
\begin{equation*}
    \frac{1}{(s_d)^2} \oint_{\mathbb{S}^{q}} \mathrm{d}\bm{\Omega} \oint_{\mathbb{S}^{q}} \mathrm{d}\bm{\Omega}' \int_F \mathrm{d}\nu \int_F  \mathrm{d}\nu' \, \Phi(\bm{x},\bm{\Omega},\bm{\Omega}',\nu,\nu') = 1.
    %\quad \text{for all} \quad (\bm{x},\bm{\Omega},\nu)\in \mathcal{D}.
\end{equation*}
This formalism allows considering anisotropic (dipole) scattering, as well as partial frequency redistribution (PRD) effects. 

The extinction coefficient in \eqref{eq:RT} can be written as $\chi(\bm{x},\bm{\Omega},\nu)=[\kappa(\bm{x})+\sigma(\bm{x})]\phi(\bm{x},\bm{\Omega},\nu)$, where $\phi$ is a normalized absorption profile\footnote{We note that the dependence of the absorption profile on the direction $\bm{\Omega}$ is a consequence of bulk motions in the medium.}, while $\kappa$ is the frequency-integrated extinction coefficient for (true) absorption.
%Similarly to the emission coefficient, also the extinction coefficient $\chi$ depends on the radiation field $I$, via the SE equations, and that this dependence is, in general, nonlinear.
The dependence of the extinction coefficient on the radiation field, via the SE equations is, in general, nonlinear.
We assume that $\chi$ is not affected by the radiation field, and it is a known input quantity of the problem.
Under this assumption, the RT problem is fully described by the linear equations \eqref{eq:RT} and \eqref{eq:emissivity}, for which a consistent solution has to be found. We note that the more general nonlinear problem is commonly treated via linearization; therefore, the results presented in this work remain relevant. A similar consideration holds for the time-dependent problem, which is frequently recast as a sequence of stationary problems.

Equations~\eqref{eq:RT}-\eqref{eq:emissivity} can be expressed from an operator prospective. Given some suitable function spaces $X_I$ and $X_\varepsilon$, so that $I\in X_I$ and $\varepsilon\in X_\varepsilon$, we define a transfer operator $\mathcal{T}$ and a scattering operator $\mathcal{S}$, respectively
\begin{equation}\label{eq:LS}
    \mathcal{T}:X_\varepsilon\to X_I, \qquad  \mathcal{S}:X_I \to X_\varepsilon.
\end{equation}
The operator $\mathcal{T}$ maps an emissivity field $\varepsilon$ to a radiation intensity $I$, via the solution of \eqref{eq:RT}, while $\mathcal{S}$ maps a radiation field $I$ to an emissivity field, evaluating the scattering integral in \eqref{eq:emissivity}. 
%In the context of radiative transfer, $\mathcal{T}$ is also known as \textit{formal solver}, and its application to an emissivity field \textit{formal solution}, often denoted with the greek letter $\Lambda$.
%Introducing the one-dimensional coordinate $s\in[0,S]$, along the direction $\bm{\Omega}$ and fixing $\nu$, we can write \eqref{eq:RT} as $$\frac{\mathrm{d}}{\mathrm{d}s}\,I(s) =-\chi(s) I(s) +\varepsilon(s),$$
Observing that the RT equation in the form \eqref{eq:RTs} has solution
%with solution, defining the optical depth, $\tau(s_0,s)=-\int_{s_0}^s\chi(s')\,\mathrm{d}s'$,
\begin{equation}\label{eq:rt_integral}
    I(s) =  \int_0^se^{\tau(s',s)}\varepsilon(s')\,\mathrm{d}s' + I_{\mathrm{in}}(0)\cdot e^{\tau(0,s)},
\end{equation}
with $\tau(s_0,s)=-\int_{s_0}^s\chi(s')\,\mathrm{d}s'$, we see that also the transfer operator is
% \begin{equation}\label{eq:rt_integral}
%     I(s) =  \int_0^se^{-\int_{s'}^s\chi(t)\,\mathrm{d}t}\varepsilon(s')\,\mathrm{d}s' + I_{\mathrm{in}}(0)\cdot e^{-\int_{s_0}^s\chi(t)\,\mathrm{d}t},
% \end{equation}
%which also is 
an integral mapping between emissivity and intensity, given an exponential kernel plus a boundary term.
The self-consistent  solution of \eqref{eq:RT} and \eqref{eq:emissivity} can be expressed as the fixed point of the composed operator $\mathcal{T}\mathcal{S}$ (which encodes a triple integral)
\begin{equation*}
    \mathcal{T}(\mathcal{S}(I))=I.
\end{equation*}
It is then natural to look for the solution of \eqref{eq:RT} and \eqref{eq:emissivity} using a fixed point iteration (also known as lambda or source iteration in the RT context). We refer to \cite{egger2014lp} for a general result on the spectral radius of the linear operator $\mathcal{T}\mathcal{S}$, showing that the corresponding fixed point iteration converges in $L_p$ (for any $1\leq p\leq \infty$) if $\chi/\sigma <1$ via a standard contraction argument.
Since the fixed point iteration is often too slow for practical use, more efficient (preconditioned) Krylov methods can be adopted, which can exploit favorable spectral distributions \cite{beckermann2001superlinear,kuijlaars2006convergence}. In fact, the convergence of such methods is closely related to the clustering of the spectrum of the discretized $\mathcal{T}\mathcal{S}$ operator, which will be the object of this study, generalizing previous results related to its spectral radius.

We note that for any continuous kernel defined on a bounded subset of ${\mathbb R}^{d}$, the corresponding integral operator is compact and its spectrum forms a bounded sequence having zero as unique accumulation point. The approximation of an integral operator by a quadrature rule, e.g. $\mathcal{S}$ according to \eqref{eq:emissivity}, results in a linear operator represented by a matrix $\Sigma_N$ of size $N$. In accordance with the compact character of the continuous operator $\mathcal{S}$, as discussed in \cite{al2014singular}, the singular values and eigenvalues of $\{\Sigma_N\}$ are clustered at zero as $N\to\infty$. Moreover, the cluster is strong, in perfect agreement with the compactness of the continuous operator. We will present the discretization of $\mathcal{T}$ and $\mathcal{S}$ in the following section, while the notion of eigenvalues distribution of of matrix sequences and their strong clustering will be discussed in more detail in Section~\ref{sec:preliminaries}.

\section{Discretization and operator assembly}\label{sec:disc}
The discretization will initially be presented in the case of a one-dimensional (1D) plane-parallel spatial domain with cylindrical symmetry. 
%This simplification is also informative since Equation~\eqref{eq:RT}, once $\bm{\Omega}$ is fixed, is essentially a 1D equation, and the multi-dimensional case can be reduced to a set of independent 1D problems. 
In a 1D plane-parallel setting, all physical quantities only vary along a given direction (hereafter with coordinate $z$) and are constant on the planes perpendicular to it. 
Thanks to the assumption of cylindrical symmetry, the direction $\bm{\Omega}$ is fully described by the inclination $\theta\in[0,\pi]\setminus \{\pi/2\}$ with respect to the vertical, or equivalently by $\mu=\cos{\theta}$, with $\mu>0$  corresponding to the emerging (i.e. observable) radiation.
%In 1D, the spatial dependence is encoded by the optical depth $\tau\in [\tau_{\mathrm{surf}},\tau_{\mathrm{deep}}]\subset\mathbb{R}$; given the optical depth conversion\footnote{We consider $\mathrm{d}\tau(\mu,\nu)=-\chi(s,\mu,\nu)\mathrm{d}s=-\phi(\nu)\chi(s,\mu)\mathrm{d}s$.}, the transfer equation~\eqref{eq:RT} can then be rewritten as
% We consider a right-handed Cartesian reference system with the $z$-axis directed along the vertical.
Introducing the optical depth scale $\mathrm{d}\tau(\mu,\nu) = -\chi(z, \mu, \nu)\mathrm{d}z/\mu$, the RT equation \eqref{eq:RT} can be written as
\begin{align}
	\frac{\mathrm{d}}{\mathrm{d}\tau}
	I(\tau,\mu,\nu) & =I(\tau,\mu,\nu) - S(\tau,\mu,\nu), \label{RT_tau} \quad \mathrm{for}\quad \tau\in[\tau_\mathrm{surf},\tau_{\mathrm{deep}}],\,\mu\in[0,1]/\{0\},\nu\in F,\\
    I(\tau_{\mathrm{deep}},\mu,\nu) & =I_\mathrm{in}(\tau_{\mathrm{deep}},\nu) \quad \text{for} \quad \mu>0, \nonumber\\
    I(\tau_{\mathrm{surf}},\mu,\nu) & =I_\mathrm{in}(\tau_{\mathrm{surf}},\nu) \quad \text{for} \quad \mu<0, \nonumber
\end{align}
where $S=\varepsilon/\chi$ is the so-called \textit{source function}.
% %
% \begin{align}
% 	\frac{\mathrm{d}}{\mathrm{d}\tau
% 	I(\tau,\mu,\nu) & = I(\tau,\mu,\nu) - S(\tau,\mu,\nu), \label{RT_tau} \\
%     I(\tau_{\mathrm{deep}},\mu,\nu) & =I_\mathrm{in}(\tau_{\mathrm{deep}},\nu) \quad \text{for} \quad \mu>0, \nonumber\\
%     I(\tau_{\mathrm{surf}},\mu,\nu) & =I_\mathrm{in}(\tau_{\mathrm{surf}},\nu) \quad \text{for} \quad \mu<0, \nonumber
% \end{align} 
% %
Despite this simplifying assumption, the core integro-differential structure of the radiative transfer problem, i.e. the the interplay between $\mathcal{T}$ and $\mathcal{S}$ operators, is maintained. We will cover the 3D generalization of Equation~\eqref{RT_tau} in Section~\ref{sec:3D} using the long-characteristics method, simply considering multiple independent ODEs, covering the 3D space.

\subsection{Plane-parallel discretization}
We consider a discrete ordinate method (DOM) to obtain a discrete version of equations \eqref{eq:RT} and \eqref{eq:emissivity}.  We refer to \cite{richling2001radiative,meinkohn2002radiative,castro2015spatial,badri2018high} for analogous derivations using finite elements. We consider a discrete grid in space with $N_s$ nodes \begin{equation*}
     \tau_{\mathrm{surf}} = \tau_1 < \tau_2 < \ldots < \tau_{N_s-1} < \tau_{N_s} = \tau_{\mathrm{deep}},
\end{equation*}
where $\tau_i=\tau_i(\mu,\nu)$, and a discrete grid with $N_\Omega$ (even) directions
\begin{equation*}
     -1 \leq \mu_1 < \mu_2 < \ldots < 0 < \ldots < \mu_{N_\Omega -1} < \mu_{N_\Omega} \leq 1.
\end{equation*}
Similarly, the frequency domain $F$ is discretized with $N_\nu$ distinct frequencies $\{\nu_i\}_{i=1}^{N_\nu}\subset F$. Given $N=N_sN_\Omega N_\nu$ the total number of degrees of freedom, we consider the collocation vector $\mathbf{X} \in \mathbb{R}^N$ collecting
all the discrete values of the continuous field $X(\tau,\mu,\nu)$ in lexicographic ordering , namely,
\begin{align*}
 \mathbf{X} = & [
X(\tau_1,\mu_1,\nu_1),
X(\tau_1,\mu_1,\nu_2),\ldots,
X(\tau_1,\mu_1,\nu_{N_\nu}),
X(\tau_1,\mu_2,\nu_1),
X(\tau_1,\mu_2,\nu_2),\ldots,\\
& X(\tau_1,\mu_2,\nu_{N_\nu}),\ldots,
X(\tau_1,\mu_{N_\Omega},\nu_{N_\nu}),
X(\tau_2,\mu_1,\nu_1),\ldots,
X(\tau_{N_s},\mu_{N_\Omega},\nu_{N_\nu})]^T.
 \end{align*}
For notational simplicity we also define the \textit{ray} $\bm{r}=(\mu,\nu)\in[-1,1]\times F$, with $N_r=N_\Omega N_\nu$ the total number of rays, and the ordering
$$\bm{r}_k=(\mu_{\left \lceil{k/N_\nu}\right \rceil},\nu_{k\bmod N_\nu}) \quad \text{for} \quad k=1,\ldots,N_r,$$
so that we can write $\mathbf{X}$ more compactly:
\begin{align*}
\mathbf{X} = & [
X(\tau_1,\bm{r}_1),
X(\tau_1,\bm{r}_2),\ldots,
X(\tau_1,\bm{r}_{N_r}),
X(\tau_2,\bm{r}_{1}),
\ldots,
X(\tau_2,\bm{r}_{N_r}),\ldots,
X(\tau_{N_s},\bm{r}_{N_r})]^T.
\end{align*}
We also define the vector
$\mathbf{X}_i=\left[X(\tau_i,\bm{r}_1),\ldots,X(\tau_i,\bm{r}_{N_r})\right]^T \in\mathbb R^{N_r}
$
which collects the entries of $\mathbf{X}$ corresponding to the spatial point $\tau_i$, such that
$\mathbf{X}=\left[\mathbf{X}_1^T,\ldots,\mathbf{X}_{N_s}^T\right]^T$.
We also consider a \textit{ray-based} permutation of $\mathbf{X}$, namely
\begin{align*}
\widetilde{\mathbf{X}} = & [
X(\tau_1,\mu_1,\nu_1),
X(\tau_2,\mu_1,\nu_1),\ldots,
X(\tau_{N_s},\mu_1,\nu_1),
X(\tau_1,\mu_1,\nu_2), \\
& X(\tau_2,\mu_1,\nu_2),\ldots,\ldots,X(\tau_{N_s},\mu_{N_\Omega},\nu_{N_\nu})]^T,
\end{align*}
so that $\widetilde{\mathbf{X}}=P\mathbf{X}$, given a permutation matrix $P\in\mathbb{R}^{N\times N}$. We define the vector corresponding to a single ray $\bm{r_k}$:
$$\widetilde{\mathbf{X}}_k=[X(\tau_1,\bm{r}_k),\ldots,X(\tau_{N_s},\bm{r}_k)]^T\in\mathbb{R}^{N_s},$$
so that the full vector can be written as
$$ \widetilde{\mathbf{X}} = [\widetilde{\mathbf{X}}_1^T,\widetilde{\mathbf{X}}_2^T,\ldots,\widetilde{\mathbf{X}}_{N_r}^T]^T. $$
We then consider the discrete counterparts of the radiation intensity and the emissivity field (or source function), $(I,S)$ in Equation~\eqref{RT_tau}, and introduce the corresponding collocation vectors with space-based and ray-based orderings, i.e. $\mathbf{I},\mathbf{S},\widetilde{\mathbf{I}},\widetilde{\mathbf{S}}\in\mathbb{R}^N$.
Since both equations \eqref{eq:RT} and \eqref{eq:emissivity} are linear, we can write them in a compact algebraic form:
\begin{align}
    & \mathbf{I}=\Lambda_N\mathbf{S}+\mathbf{I}_\mathrm{in}, \label{eq:RT_algebraic_form1} \\
    & \mathbf{S}=\Sigma_N\mathbf{I}+\mathbf{t}, \label{eq:RT_algebraic_form2}
\end{align}
with $\Lambda_N,\Sigma_N\in\mathbb{R}^{N\times N}$. Equation \eqref{eq:RT_algebraic_form1} encodes the numerical solution of \eqref{eq:RT} (cf. Equation~\eqref{eq:rt_integral}), given an input emissivity vector $\bm{\varepsilon}$ and the boundary data %
$$ \mathbf{I}_\mathrm{in} = [\mathbf{I}_{\mathrm{in},1}^T,\bm{0},\mathbf{I}_{\mathrm{in},N_s}^T]^T\in\mathbb{R}^N,$$
where only the first and last $N_\nu N_\Omega/2$ elements are possibly nonzero. The structure of the operator $\Lambda_N$, which depends on the extinction coefficient $\chi$ and on the numerical method used to solve \eqref{RT_tau}, will be discussed in Section~\ref{sec:L}. Similarly,  equation \eqref{eq:RT_algebraic_form2} encodes the computation of the scattering integral in \eqref{eq:emissivity}, given an input emissivity $\mathbf{I}$ and the collocation vector $\mathbf{t}\in\mathbb{R}^N$ discretizing $\varepsilon_t$.
Intuitively, the operator $\Lambda_N$ couples the radiation at different spatial points, while $\Sigma_N$ couples different directions and frequencies (i.e. rays), as encoded in the scattering kernel $\Phi$ in \eqref{eq:emissivity}.
We notice that the transformations defined in equations \eqref{eq:RT_algebraic_form1} and \eqref{eq:RT_algebraic_form2} are the discrete counterparts of the continuous maps defined in \eqref{eq:LS}.

Solving the considered discrete radiative transfer problem corresponds to finding the vector $\mathbf{I}$ so that \eqref{eq:RT_algebraic_form1} and \eqref{eq:RT_algebraic_form2} are both satisfied. By substitution, we obtain the following linear system to be solved
\begin{equation}\label{eq:RT_sys_algebraic}
    (I\!d_N-\Lambda_N\Sigma_N)\mathbf{I}=\Lambda_N\mathbf{t}+\mathbf{I}_\mathrm{in},
\end{equation}
or, more compactly,
\begin{equation*}
    A_N\mathbf{I}=\mathbf{b},
\end{equation*}
with $\mathbf{b}=\Lambda_N\mathbf{t}+\mathbf{I}_\mathrm{in}$ and $A=I\!d_N-\Lambda_N\Sigma_N$.
We note that \eqref{eq:RT_sys_algebraic} can be seen as a discretized Fredholm integral equation of the second kind.

%Transport operator
\subsection{Transfer operator $\Lambda_N$}\label{sec:L}
First, we explicitly derive the entries of the  matrix $\Lambda_N$ using the implicit Euler method to integrate the transfer equation~\eqref{RT_tau} (i.e. a right-hand rectangular quadrature rule for \eqref{eq:rt_integral}), as a convenient example.
A similar derivation could be performed to any other formal solver. Applying the implicit Euler stencil to \eqref{RT_tau}, we obtain, for incoming and outgoing directions $\bm{r}=(\mu,\nu)$
\begin{align}
    I(\tau_{i+1},\bm{r}) & = \frac{I(\tau_{i},\bm{r}) +\Delta \tau_i(\mu,\nu)S(\tau_{i+1},\bm{r})}{1+\Delta \tau_i(\bm{r})},\qquad
    \text{ for } \quad \mu<0 \quad \text{ and } \quad  i = 1,\ldots,N_{s}-1,\label{ie_1}\\
    I(\tau_{i-1},\bm{r}) & = \frac{I(\tau_i,\bm{r}) +\Delta \tau_{i-1}(\bm{r})S(\tau_{i-1},\bm{r})}{1+\Delta \tau_{i-1}(\bm{r})},  \qquad
    \text{ for } \quad \mu>0 \quad \text{ and } \quad  i = 2,\ldots,N_s, \label{ie_2}
\end{align}
where
\begin{equation*}
\Delta \tau_{i}(\bm{r})=\Delta \tau_{i}(\mu,\nu)=\tau_{i+1}(\mu,\nu)-\tau_i(\mu,\nu).
\end{equation*}
We notice that, since $\Delta\tau_i$ is positive for all $i$, if $S=0$, i.e. only absorption is present, the intensity is exponentially decaying while the radiation is traversing the spatial domain.
Moreover, different directions and frequencies
are decoupled
in~\eqref{ie_1} and~\eqref{ie_2}, as in the continuous counterpart~\eqref{eq:RT}. Providing the initial conditions $I(\tau_1,\mu,\nu)=I_{\text{in}}(\tau_1,\nu)$ for all $\mu<0$ and $I(\tau_{N_s},\mu,\nu)=I_\mathrm{in}(\tau_{N_s},\nu)$ for $\mu>0$,
we recursively apply~\eqref{ie_1} and~\eqref{ie_2} and obtain
\begin{align}
    I(\tau_i,\bm{r})  & = f_i(\bm{r}) I_{\text{in}}(\tau_1,\nu) + \sum_{j=2}^i f_{i,j}(\bm{r})S(\tau_j,\bm{r}) \qquad \text{ for } \quad \mu<0 \quad \text{ and } \quad  i = 2,\ldots,N_s,\label{ie_mono1}\\
    I(\tau_i,\bm{r})  & = h_i(\bm{r}) I_{\text{in}}(\tau_{N_s},\nu) + \sum_{j=i}^{N_s-1} h_{i,j}(\bm{r})S(\tau_j,\bm{r}) \qquad  \text{ for } \quad \mu>0 \quad \text{ and } \quad  i = 1,\ldots,N_s-1,\label{ie_mono2}
\end{align}
with
\begin{align}
    f_{i,j}(\bm{r}) & = \frac{\Delta \tau_{j-1}(\bm{r})}{\prod_{l=j}^{i}\left[  1 + \Delta \tau_{l-1}(\bm{r})\right]} \quad \text{and} \quad f_i(\bm{r}) = \frac{1}{\prod_{l=1}^{i-1}  \left[1 + \Delta \tau_{l}(\bm{r})\right]},\label{ie_mono3}\\
    h_{i,j}(\bm{r}) & = \frac{\Delta \tau_{j}(\bm{r})}{\prod_{l=i}^{j}  \left[1 + \Delta \tau_{l}(\bm{r})\right]} \quad \text{and} \quad h_i(\bm{r}) = \frac{1}{\prod_{l=i}^{N_s-1}  \left[1 + \Delta \tau_{l}(\bm{r})\right]}.\label{ie_mono4}
\end{align}
For any $\bm{r}=(\mu,\nu)$ with positive $\mu$, equation~\eqref{ie_mono1} can be arranged in the following matrix form
% $F\in\mathbb R^{N_s\times N_s}$, namely,
%
$$ \begin{bmatrix}
I(\tau_1,\bm{r}) \\
I(\tau_2,\bm{r})\\
%I(\nu,\mu,\tau_3) \\
\vdots \\
I(\tau_{N_s},\bm{r}) \\
\end{bmatrix} =
\begin{bmatrix}
0 & & &  \\\
& f_{2,2}(\bm{r}) & &  \\
& \vdots & \ddots &  \\
 & f_{N_s,2}(\bm{r})   & \cdots   & f_{N_s,N_s}(\bm{r})\\
\end{bmatrix}
\begin{bmatrix}
S(\tau_1,\bm{r}) \\
S(\tau_2,\bm{r}) \\
%\varepsilon(\mu,\tau_3) \\
\vdots \\
S(\tau_{N_s},\bm{r}) \\
\end{bmatrix} +
I_{\text{in}}(\tau_1,\nu)
\begin{bmatrix}
1 \\
f_2(\bm{r}) \\
%\varepsilon(\mu,\tau_3) \\
\vdots \\
f_{N_s}(\bm{r}) \\
\end{bmatrix},$$
or, more compactly
\begin{equation}\label{eq:lamda_down}
  \bm{I}(\bm{r}) =  \Lambda^{\downarrow}(\bm{r})\bm{S}(\bm{r}) + \bm{I}_{\text{in}}^{\downarrow}(\bm{r}),\quad \mathrm{for}\quad\mu<0,
\end{equation}
where $\Lambda^{\downarrow}\in\mathbb{R}^{N_s\times N_s}$ and $\bm{I},\bm{S},\bm{I}_{\text{in}}^{\downarrow}\in\mathbb{R}^{N_s}$ follow directly from the above matrix equation.
Analogously, equation~\eqref{ie_mono2} leads to
$$ \begin{bmatrix}
I(\tau_1,\bm{r}) \\
I(\tau_2,\bm{r})\\
\vdots \\
I(\tau_{N_s},\bm{r}) \\
\end{bmatrix} =
\begin{bmatrix}
h_{1,1}(\bm{r}) & \cdots & \cdots h_{1,N_s- 1}(\bm{r}) & \\
& \ddots &  \vdots &\\
 & & h_{N_s-1,N_s-1}(\bm{r}) &\\
 & & & 0
\end{bmatrix}
\begin{bmatrix}
S(\tau_1,\bm{r}) \\
S(\tau_2,\bm{r}) \\
\vdots \\
S(\tau_{N_s},\bm{r}) \\
\end{bmatrix} +
I_{\text{in}}(\tau_{N_s},\nu)
\begin{bmatrix}
h_1(\bm{r}) \\
\vdots \\
h_{N_s-1}(\bm{r}) \\
1 \\
\end{bmatrix},$$
or, more compactly
\begin{equation}\label{eq:lamda_up}
    \bm{I}(\bm{r}) =  \Lambda^{\uparrow}(\bm{r})\bm{S}(\bm{r}) + \bm{I}_{\text{in}}^{\uparrow}(\bm{r}),\quad \mathrm{for}\quad\mu>0.
\end{equation}
Combining \eqref{eq:lamda_down} and \eqref{eq:lamda_up}, including the discretizations over directions and frequencies, conveniently using the ray-based ordering, with
\[
\Lambda_{[k]} =
     \begin{cases}
       \Lambda^{\downarrow}(\bm{r}_k) &\quad \text{for} \quad k\leq N_r/2 \quad (\text{i.e.}\,\mu<0),\\
       \Lambda^{\uparrow}(\bm{r}_k) &\quad \text{for} \quad k> N_r/2 \quad (\text{i.e.}\,\mu>0),
     \end{cases}
\]
for $k=1,\ldots,N_r$, and similarly for $(\widetilde{\mathbf{I}}_\text{in})_k$, we can write\footnote{To avoid confusion, we use the following notation along this work: we write $A_N$ for a square matrix of size $d_N$ (with $d_N=N$ for RT operators) and $A_{[k]}$ to denote the $k$th block index for block-diagonal matrices.}
\begin{equation}\label{eq:lambda_block}
\begin{bmatrix}
\widetilde{\mathbf{I}}_1 \\
\widetilde{\mathbf{I}}_2 \\
\vdots \\
\mathbf{I}_{N_r}
\end{bmatrix} =
\begin{bmatrix}
\Lambda_{[1]} & &  \\
& \Lambda_{[2]} &  \\
& & \ddots &  \\
& & &\Lambda_{[N_r]}
 \end{bmatrix}
 \begin{bmatrix}
\widetilde{\mathbf{S}}_1 \\
\widetilde{\mathbf{S}}_2 \\
\vdots \\
\mathbf{S}_{N_r}
\end{bmatrix}
+
\begin{bmatrix}
(\widetilde{\mathbf{I}}_\text{in})_1 \\
(\widetilde{\mathbf{I}}_\text{in})_2 \\
\vdots \\
(\widetilde{\mathbf{I}}_\text{in})_{N_r}
\end{bmatrix},
\end{equation}
or, more compactly,
\begin{equation}
    \widetilde{\mathbf{I}} =  \widetilde{\Lambda}_N\widetilde{\mathbf{S}} + \widetilde{\mathbf{I}}_{\text{in}},
\end{equation}
which is a permuted version of \eqref{eq:RT_algebraic_form1},
with $\widetilde{\Lambda}=P\Lambda_N P^T$, equivalent to
\begin{equation}\label{eq:perm_L}
    \mathbf{I} =  P^T\widetilde{\Lambda}_NP\mathbf{S} + \mathbf{I}_{\text{in}}.
\end{equation}
As a final remark, we considered the implicit Euler method for the assembly of $\Lambda_N$.
However, a similar derivation can be applied to any formal solver (i.e. ODE integrator), such as high order-exponential integrators or multistep methods.
It would be sufficient to modify~\eqref{ie_mono3}-\eqref{ie_mono4} according to the selected formal solver, while the structure of \eqref{eq:lambda_block} would remain the same.
In terms of spectral analysis, the specific choice of integrator would not change the essence of this work, given that: (i) both transfer and scattering operators can be framed as integral operators, thus compact; (ii) the global radiative transfer operator contains the product $\Lambda_N\Sigma_N$, cf.~(\ref{eq:RT_sys_algebraic}).
The spectral distributions of matrix sequences corresponding to integral operators are clustered at zero, since the continuous operators are compact, with a unique accumulation point at zero. Given the $*$-algebra property of spectral distributions, the same is true for the product matrix sequence $\{\Lambda_N\Sigma_N\}$. Section~\ref{sec:symbol} is devoted to make this intuition formally precise.

\subsection{Scattering operator $\Sigma_N$}\label{sec:S}
To explicitly write the structure of $\Sigma_N$, we can first rewrite the emissivity equation \eqref{eq:emissivity} in terms of the source function
\begin{equation}\label{eq:S}
S(\tau,\mu,\nu)=\gamma(\tau,\mu,\nu)\int_{-1}^{1}\int_F\Phi(\tau,\mu,\mu',\nu,\nu')I(\tau,\mu',\nu')\,\mathrm{d}\nu'\mathrm{d}\mu'+\varepsilon_t(\tau,\mu,\nu)/\chi(\tau,\mu,\nu),
\end{equation}
with $\gamma=\sigma/(2\chi)$. In the context of DOM, we approximate the scattering integral by numerical quadrature:
\begin{align*}    \int_{-1}^{1}\int_F\Phi(\tau,\mu,\mu',\nu,\nu')I(\tau,\mu',\nu')\,\mathrm{d}\nu'\mathrm{d}\mu'& \approx  \sum_{i=1}^{N_\Omega}\sum_{j=1}^{N_\nu}w_{i,j}\Phi(\tau,\mu,\nu,\mu_i,\nu_j)I(\tau,\mu_i,\nu_j)\\
&=\frac{1}{N_r}\sum_{k=1}^{N_r}w_k\Phi(\tau,\bm{r},\bm{r}_k)I(\tau,\bm{r}_k),
\end{align*}
with appropriate quadrature weights $w_k\in\mathbb{R}$. For each spatial node $i=1,\ldots,N_s$, the scattering kernel coefficients can be collected in the (possibly full) matrix $\Psi_{[i]}\in\mathbb{R}^{N_r\times N_r}$:

$$
\Psi_{[i]}=
\begin{bmatrix}
\Phi(\tau_i,\bm{r}_1,\bm{r}_1) & \Phi(\tau_i,\bm{r}_1,\bm{r}_2) & \cdots & \Phi(\tau_i,\bm{r}_1,\bm{r}_{N_r}) \\
\Phi(\tau_i,\bm{r}_2,\bm{r}_1) & \Phi(\tau_i,\bm{r}_2,\bm{r}_2) & \cdots & \Phi(\tau_i,\bm{r}_2,\bm{r}_{N_r}) \\
\vdots  & \ddots & & \vdots\\
\Phi(\tau_i,\bm{r}_{N_r},\bm{r}_1) & \Phi(\tau_i,\bm{r}_{N_r},\bm{r}_2) & \cdots & \Phi(\tau_i,\bm{r}_{N_r},\bm{r}_{N_r})
\end{bmatrix}.
$$
The scattering kernel $\Phi$ is symmetric, i.e. given two distinct rays $\bm{r},\bm{r}'$ then $\Phi(\tau,\bm{r},\bm{r}')=\Phi(\tau,\bm{r}',\bm{r})$ and therefore the matrix $\Psi_i$ is symmetric for all $i$.
Since scattering does not couple different spatial points, the scattering matrix $\Sigma_N$ in equation \eqref{eq:RT_algebraic_form2} is block-diagonal. The block equation corresponding to the optical depth $\tau_i$, for $i=1,\ldots,N_s$, is given by
$$
\mathbf{S}_i=
\Gamma_{[i]}\Psi_{[i]}W_r\mathbf{I}_i+ \mathbf{t}_i,
$$
where $\Gamma_{[i]},W_r\in\mathbb{R}^{N_r\times N_r}$ are the following diagonal matrices
$$
\Gamma_{[i]} =
\begin{bmatrix}
\gamma(\tau_i,\bm{r}_1) &  & & \\
 & \gamma(\tau_i,\bm{r}_2) & & \\
& & \ddots & \\
& & & \gamma(\tau_i,\bm{r}_{N_r})
\end{bmatrix}
\quad \text{and}\quad
W_r = \frac{1}{N_r}
\begin{bmatrix}
w_1 & & & \\
 & w_2 & & \\
& & \ddots & \\
& & & w_{N_r}
\end{bmatrix},
$$
collecting scattering coefficients and quadrature weights respectively. Finally, the total scattering operator $\Sigma_N\in\mathbb{R}^{N\times N}$, can be written as
\begin{equation}\label{eq:Sigma}
\Sigma_N =
\begin{bmatrix}
\Gamma_{[1]} & & & \\
 & \Gamma_{[2]} & & \\
& & \ddots & \\
& & & \Gamma_{[N_s]}
\end{bmatrix}
\begin{bmatrix}
\Psi_{[1]} & & & \\
 & \Psi_{[2]} & & \\
& & \ddots & \\
& & & \Psi_{[N_s]} \\
\end{bmatrix}(I\!d_{N_s}\otimes W_r) = \Gamma_N\Psi_N W_N,
\end{equation}
where both $\Gamma_N,W_N\in\mathbb{R}^{N\times N}$ are diagonal, while $\Sigma_N$ is block diagonal and symmetric.

\subsection{Multi-dimensional case}\label{sec:3D}
%
% \cred{aggiungo figura da 1D a 3D?}
The one dimensional case can be extended to multidimensional setting, considering the domain $D$ to be a cuboid in $d$ dimensions. 
For $d=3$, the ray $\bm{r}=(\mu,\nu)\in[-1,1]\times F$ has to be replaced by the more general $\bm{r}=(\bm\Omega,\nu)=(\mu,\chi,\nu)\in[-1,1]\times[0,2\pi)\times F$.
In terms of transfer, the block diagonal structure of $\widetilde{\Lambda}_N$ is preserved. Moreover, for each spatial point $\bm{x}\in D$ and ray $(\bm{\Omega},\nu)$ it is possible to consider the segment intersecting $\bm{x}$ and some $x_{\min},x_{\max}\in\partial D$ uniquely determined by $\bm{x}$ and $\bm{\Omega}$. As in the 1D case, we can use the corresponding optical depth scale, with an $M$-points\footnote{More precisely, $M=M(\bm{x},\bm{\Omega})$, i.e. the number of interval in the ray partition depends on the spatial point and direction under consideration.} partition $\tau_{\min}=\tau_1<\ldots<\tau_M=\tau_{\max}$ with  $\Delta \tau_i=|\tau_{i+1}-\tau_i|$, for $i=1,\ldots,M-1$, so that \eqref{ie_mono1} and \eqref{ie_mono2} still apply, replacing $N_s$ by $M$.
This technique to propagate radiation in multiple dimensions until reaching the spatial domain boundary is known as a long-characteristic method.
Given a ray $\bm{r}_k$, the corresponding $k$th row in Equation~\eqref{eq:lambda_block} has to be replaced by $L_k$ decoupled vectorial equations, where $L_k$ is the number of parallel directions used to ``cover'' the 3D space\footnote{For example, if the domain $D$ is a cuboid discretized with $N_x\times N_y\times N_z$ points, we can chose $L=N_xN_y$.}:
\begin{equation*}
\widetilde{\mathbf{I}}_{k,\ell} = \Lambda_{[k,\ell]}\widetilde{\mathbf{S}}_{k,\ell}+ (\widetilde{\mathbf{I}}_{\text{in}})_{k,\ell}, \qquad \text{for} \qquad \ell=1,\ldots,L_k,
\end{equation*}
where $\widetilde{\mathbf{I}}_{k,\ell}=[I(\tau_1(k,\ell),\bm{r}_k),I(\tau_2(k,\ell),\bm{r}_k),\ldots,I(\tau_M(k,\ell),\bm{r}_k)]^T\in\mathbb{R}^{M}$, given the partition $\{\tau_m(k,\ell)\}_{m=1}^M$ with $M=M(k,\ell)$.
Since the emissivity will often be computed on a regular grid, not containing the $M$ points where
$\widetilde{\mathbf{I}}_{k,\ell}$ is evaluated, we use interpolation operators to map the nodes $\{\tau_m(k,\ell)\}_{m=1}^M$, for $\ell=1,\ldots,L$, to a set of corresponding nodes in cartesian coordinates $(x,y,z)\in D$:
\begin{equation*}
\widetilde{\mathbf{I}}_k=T_{\mathrm{RC},k}[\widetilde{\mathbf{I}}_{k,1},\widetilde{\mathbf{I}}_{k,2},\ldots,\widetilde{\mathbf{I}}_{k,L}]^T \qquad \text{for} \qquad k=1,\ldots,N_r,
\end{equation*}
where $T_{\mathrm{RC},k}$ is the ``Ray'' to ``Cartesian'' linear interpolation operator (with rows summing up to one) of size $N_s\times N_{k,s}$, where $N_{s,k}=\sum_{\ell=1}^{L_k}M(\ell,k)$ is the total number of spatial points used to compute transfer for the $k$th ray. Similarly,
\begin{align*}
&[\widetilde{\mathbf{S}}_{k,1},\widetilde{\mathbf{S}}_{k,1},\ldots,\widetilde{\mathbf{S}}_{k,L_k}]^T =T_{\mathrm{CR},k}\widetilde{\mathbf{S}}_k, \\
&[(\widetilde{\mathbf{I}}_{\text{in}}^{\uparrow})_{k,1},(\widetilde{\mathbf{I}}_{\text{in}}^{\uparrow})_{k,2},\ldots,(\widetilde{\mathbf{I}}_{\text{in}}^{\uparrow})_{k,L_k}]^T =T_{\mathrm{CR},k}(\widetilde{\mathbf{I}}_{\text{in}}^{\uparrow})_k,
\end{align*}
with $T_{\mathrm{CR},k}\in \mathbb{R}^{N_{k,s}\times N_s}$ is a linear interpolation operator mapping data from the cartesian $(x,y,z)$ grid to the $L_k$ rays. We illustrate an example for a 2D domain in Figure~\ref{fig:grids}. In multiple dimensions, while the notation can become cumbersome, the structure of transfer (cf. Equation~\eqref{eq:lambda_block}) remains the same, with $\Lambda_{[k]}$ collecting $L_k$ independent one-dimensional problems and linear interpolations for each ray index $k=1,\ldots,N_r$:
\begin{equation*}
    \Lambda_{[k]} = T_{\mathrm{CR},k}\Lambda_{\mathrm{R},k} T_{\mathrm{RC},k},
\end{equation*}
with $\Lambda_{\mathrm{R},k}\in\mathbb{R}^{N_{k,s}\times N_{k,s}}$ a block diagonal matrix with $L_k$ blocks simply corresponding to the 1D problems as in \eqref{eq:lamda_down} or \eqref{eq:lamda_up}.
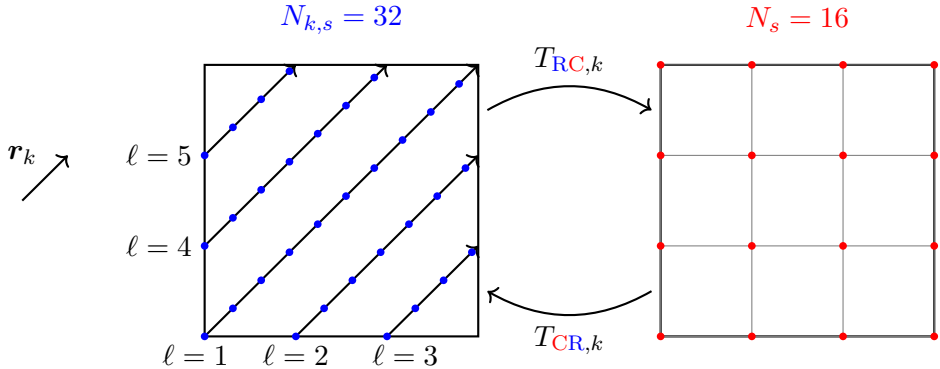
\begin{figure}[ht]
\centering
\begin{tikzpicture}[scale=1.2]

% First square domain (left)
\draw[thick] (0,0) rectangle (3,3);
\node at (1.5,3.5) {\cblue{$N_{k,s}=32$}};

\draw[->, thick] (-2,1.5) -- (-1.5,2);
\node at (-2,2) {$\bm{r}_k$};

\node at (-0.1,-0.2) {$\ell = 1$};
\node at (1,-0.2) {$\ell = 2$};
\node at (2.2,-0.2) {$\ell = 3$};
\node at (-0.5,1) {$\ell = 4$};
\node at (-0.5,2) {$\ell = 5$};

% Draw the arrow
    \draw[->, thick] (0,0) -- (3,3);
    \draw[->, thick] (0,1) -- (2,3);
    \draw[->, thick] (0,2) -- (1,3);
    \draw[->, thick] (1,0) -- (3,2);
    \draw[->, thick] (2,0) -- (3,1);

% Diagonal arrows in the first domain with blue dots
\foreach \x in {-2,...,2} {
    \pgfmathsetmacro{\xstart}{max(0,\x)}
    \pgfmathsetmacro{\ystart}{max(0,-\x)}
    \pgfmathsetmacro{\xend}{min(3,3+\x)}
    \pgfmathsetmacro{\yend}{min(3,3-\x)}

    \foreach \i in {0,...,9} {
        \pgfmathsetmacro{\fx}{\xstart + \i*0.31}
        \pgfmathsetmacro{\fy}{\ystart + \i*0.31}
        \pgfmathparse{\fx>3}
        \ifdim\pgfmathresult pt=1pt
          % fx < 1 --> skip
        \else
          \pgfmathparse{\fy>3}
          \ifdim\pgfmathresult pt=1pt
            % fy < 1 --> skip
          \else
            \fill[blue] (\fx,\fy) circle (1.2pt);
          \fi
        \fi
    }
}

% Second square domain (right)
\draw[thick] (5,0) rectangle (8,3);
\node at (6.5,3.5) {\cred{$N_s=16$}};

% Grid lines in the second domain
\foreach \i in {5,...,8} {
    \draw[gray, thin] (\i,0) -- (\i,3);
}
\foreach \j in {0,...,3} {
    \draw[gray, thin] (5,\j) -- (8,\j);
}

% Red dots at grid intersections including boundaries
\foreach \i in {5,...,8} {
    \foreach \j in {0,...,3} {
        \fill[red] (\i,\j) circle (1.2pt);
        % Optional labels:
        % \node[red, below right] at (\i,\j) {\tiny ($\i$,$\j$)};
    }
}

% Curved arrows (opposite directions)
\draw[->, thick, bend left=30] (3.1,2.5) to node[above] {$T_{\mathrm{\cblue{R}\cred{C}},k}$} (4.9,2.5);
\draw[->, thick, bend left=30] (4.9,0.5) to node[below] {$T_{\mathrm{\cred{C}\cblue{R}},k}$} (3.1,0.5);

\end{tikzpicture}
\caption{Representation of interpolation operators between two discretizations of a square domain $D$ in 2D. In the left ones, (used to compute transfer), give a ray $\bm{r}_k$, we consider $L_k=5$ rays, where the longest one ($\ell=1$) is partitioned with $M(k,1)=10$ nodes. On the right, we show the cartesian grid, where scattering is computed.}\label{fig:grids}
\end{figure}

In terms of the scattering operator $\Sigma_N$, Equation~\eqref{eq:Sigma} still holds. Equation~\eqref{eq:S} is simply replaced by the multidimensional counterpart $S(\bm{x,\bm{\Omega},\nu})=\varepsilon(\bm{x,\bm{\Omega},\nu})/\chi(\bm{x,\bm{\Omega},\nu})$, given \eqref{eq:emissivity}. The derivation leading to Equation~\eqref{eq:Sigma} remains the same.

\subsection{Global operator}
Given the global radiative transfer operator $A_N = I\!d_N-\Lambda_N\Sigma_N$ and $\Lambda_N=P^T\widetilde{\Lambda}_NP$ from \eqref{eq:perm_L} with $P$ a permutation operator, we can write
\begin{align}\label{eq:A_N-1}
    A_N & = I\!d_N-P^T\widetilde{\Lambda}_NP\Sigma_N\\ \label{eq:A_N}
        & = I\!d_N-P^T\widetilde{\Lambda}_NP\Gamma_N\Psi_N W_N,
\end{align}
where we can assume the matrix $\Gamma_N\Psi_N$ to be bounded from above in spectral norm by a constant independent of $N$.
For $d>1$ we have to include interpolation operators between cartesian and ray-wise discretizations in the transfer operator, i.e. $\Lambda_N=P^TT_{\mathrm{CR}}\Lambda_{\mathrm{R}} T_{\mathrm{RC}}P$, with $T_{\mathrm{CR}}=\mathrm{blockDiag}_{k=1}^{N_r}(T_{\mathrm{CR,k}})$ and similarly for $\Lambda_{\mathrm{R}}=\mathrm{blockDiag}_{k=1}^{N_r}(\Lambda_{\mathrm{R},k})$ of size $\sum_{k=1}^{N_r}N_{k,s}$.
In practice, permutations operators are not relevant to the following spectral analysis, since they preserve spectra.
Interpolation operators also do not play a significant role, as detailed in the next section, due to the $*$-algebra property of spectral distributions and because they are bounded in the infinity norm and therefore in the spectral norm (i.e. the Schatten infinity norm, cf. Section~\ref{sec:preliminaries}) given sparsity \cite[Appendix~A]{golinskii2007asymptotic}.

%\cred{@Stefano: Per gli operatori di interpolazione, dovremmo aggiungere qualcosa? E.g. un remark? Per ora sono liquidate senza troppe spiegazioni.}
%\cred{Prima delle 4.1 e 4.2 estendere le intro sulle definizioni e argomenti a seguire?}

\section{Spectral analysis} \label{sec:symbol}

The current section is organized as follows. In Section \ref{sec:preliminaries} we introduce definitions and tools for analyzing $A_N$ and the related matrix sequence $\{A_N\}$, with $A_N$ as in (\ref{eq:A_N-1})-(\ref{eq:A_N}). We are interested in localization results, extremal eigenvalues and singular values, and Weyl distributions: in Section~\ref{sec:results}, after new theoretical results on sparsely undounded matrix sequences i.e. Theorem \ref{th:s.u. *-algebra} and Theorem \ref{th: zero-distr ideal of s.u. *-algebra}, we perform a precise study mainly dealing with Weyl distributions and clustering analysis of our coefficient matrix sequences.

The guiding idea is the following: as clear from (\ref{ie_mono1})-(\ref{ie_mono4}), our matrices stem from the approximation of integral operators; by their compactness, we must expect that the matrix sequences associated with meaningful integration formulae are zero-distributed; see Definition \ref{distribution} with $h\equiv 0$, Definition \ref{def-cluster} with $s=0$, and the results in \cite{al2014singular}.

\subsection{Preliminaries} \label{sec:preliminaries}

As a starting point, we report the essential spectral tools, which are foundational for the theoretical results in Section~\ref{sec:results}, where we describe the spectral behavior and the singular value behavior of the RT matrix sequences, both in term of distributions and clustering analysis.
First, we define the notion of approximating class of sequences and spectral distribution for eigenvalues and singular values, then we report few theoretical tools used to derive the main results. As an intermediate step, we introduce notions and results related to sparsely vanishing and sparsely unbounded functions and matrix sequences. Furthermore, we present the concept of clustering, providing a link to the idea of spectral distributions.

Following the very exhaustive book \cite{bhatia2013matrix}, given a matrix $A_N$ of size $N$ with singular values $\sigma_{1}\geq\sigma_{2}\geq\cdots\geq\sigma_{N}\geq0$, the Schatten $p$ norm is defined as
\begin{eqnarray*}
  &&\left\|A_N\right\|_{S,p}=\left(\sum_{n=1}^{N}\sigma_{n}^{p}\right)^{\frac{1}{p}}, \qquad p\in[1,\infty),\\
  &&\left\|A_N\right\|=\sigma_{1},\qquad p=\infty.
\end{eqnarray*}
We now recall the definition of approximating class of sequences and eigen/singular value distribution, with a useful proposition borrowed from \cite{capizzano2001distribution}.  We then recall the notion of strong and weak clustering \cite{tyrtyshnikov1996unifying}. Finally, Proposition \ref{lem} and Proposition \ref{a.c.s.-product - prop 5.5}, as well as the notions of sparsely vanishing and sparsely unbounded functions and matrix sequences are taken from \cite{GLT-I}.

\begin{definition} \label{appr:seq}
{\rm Suppose a sequence of matrices $\{A_N\}$, $A_N$ of size $d_N$ ($d_k<d_{k+1}$ for each $k\in \mathbb{N}$), is
given. We say that $\{\{B_{N,m}\}:\
m\in\mathbb{N}^+\}$, $B_{N,m}$ of size $d_N$, is an approximating
class of sequences (a.c.s.) for $\{A_N\}$ if, for all
sufficiently large $m\in\mathbb{N}^{+}$, the following splitting
holds:
\begin{equation*}\label{spli1}
A_N=B_{N,m}+R_{N,m}+N_{N,m}\quad \mbox{for all}\, N> n_m,
\end{equation*}
with
\begin{equation*}
\label{propri1} {\rm rank}(R_{N,m}) \leq  d_N\, c(m), \quad \|
N_{N,m}\| \leq  \omega(m),
\end{equation*}
where $n_m$, $ c(m)$ and $\omega(m)$ depend only on $m$ and, moreover,
\begin{equation*}
\label{propri1bis} \lim_{m\to\infty} c(m)=0, \quad
\lim_{m\to\infty} \omega(m)=0.
\end{equation*}}
In short we write $\{\{B_{N,m}\}:\ m\}\stackrel{\rm a.c.s.}{\longrightarrow}\{A_N\}$.
\end{definition}

\begin{definition} \label{distribution}
{\rm We say that a sequence $\{A_N\}$, $A_N$ of size $d_N$ ($d_k<d_{k+1}$ for each $k\in \mathbb{N}$), is distributed in the
eigenvalue sense as a measurable function $h$ (known as eigenvalue or singular value symbol of $\{A_N\}$) over its domain $K$
with positive and finite (Lebesgue) measure $\mu\{K\}$, and we
write $\{A_{N}\}\sim_\lambda (h,K)$, if and only if for every $F\in
{\cal C}_0(\mathbb C)$ (continuous with bounded support over the complex field
$\mathbb C$)
\begin{eqnarray*}
\lim_{N\to\infty} \Sigma_{\lambda}(F,A_{N})= \frac 1 {\mu\{K\}}\int_K
F\bigl(h(x)\bigr)\,\textrm{d}x,
\quad {\rm where} \quad
\Sigma_{\lambda}(F,A_{N})= \frac 1 {d_N}\sum_{i=1}^{d_N}
F\Bigl(\lambda_i^{(N)}\Bigr),
\end{eqnarray*}
with $\left\{\lambda_i^{(N)}\right\}_{i=1}^{d_N}$ denoting the set of
the eigenvalues of $A_N$ (counted with their multiplicities) and
with the integral performed with respect to the same measure
$\mu\{\cdot\}$.

Moreover, the distribution is in the singular value sense,
and we write $\{A_{N}\}\sim_\sigma (h,K)$, if and only if for every
$F\in {\cal C}_0(\mathbb R^{+}_0)$ (continuous with bounded support over the real
nonnegative numbers)
\begin{eqnarray*}
\lim_{N\to\infty} \Sigma_{\sigma}(F,A_{N})= \frac 1
{\mu\{K\}}\int_K F\bigl(|h(x)|\bigr)\,\textrm{d}x, \, {\rm where} \quad
\Sigma_{\sigma}(F,A_{N})= \frac 1 {d_N}\sum_{i=1}^{d_N}
F\Bigl(\sigma_i^{(N)}\Bigr),
\end{eqnarray*}
with $\left\{\sigma_i^{(N)}\right\}_{i=1}^{d_N}$ denoting the set of
the singular values of $A_N$ (counted with their multiplicities)
and with the integral performed with respect to the same measure
$\mu\{\cdot\}$.}
\end{definition}
\begin{proposition}
\label{lem} Suppose a sequence of
matrices $\{A_N\}$, $A_N$ of size $d_N$ ($d_k<d_{k+1}$ for each $k\in \mathbb{N}$),
is given, and $\{\{B_{N,m}\}:\, m\in\mathbb{N}^+\}$, $B_{N,m}$ of size $d_N$, is an a.c.s. for $\{A_N\}$
in the sense of Definition \ref{appr:seq}. Assume  that
$\{B_{N,m}\}\sim_\sigma (h_m,K)$ and that $h_m$ converges in
measure to the measurable function $h$ over $K$, $K$ of finite and
positive measure. Then necessarily
\begin{equation*}
\label{l7} \{A_N\}\sim_\sigma (h,K).
\end{equation*}
Furthermore assume that all $B_{N,m}, N_{N,m}, R_{N,m}$
 are Hermitian and suppose that \\$\{B_{n,m}\}\sim_\lambda (h_m,K)$ with
$h_m$ converging in measure to the measurable function $h$ over
$K$, $K$ of finite and positive measure. Then necessarily
\begin{equation*}
\label{l7-eig} \{A_N\}\sim_\lambda (h,K).
\end{equation*}
\end{proposition}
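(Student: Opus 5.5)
The natural route is to prove the singular value part through test functions, exploiting the fact that low rank perturbations and small norm perturbations barely move the empirical singular value distribution. I would then recover the eigenvalue part from the Hermitian structure in the same way. Since $\mathcal{C}_0(\mathbb R^+_0)$ functions can be uniformly approximated by Lipschitz functions with bounded support, it suffices to consider $F$ Lipschitz with constant $L_F$ and $\|F\|_\infty\le M_F$. I fix $m$ large and write $A_N=B_{N,m}+R_{N,m}+N_{N,m}$ as in Definition~\ref{appr:seq}. The goal is the estimate
\[
\bigl|\Sigma_\sigma(F,A_N)-\Sigma_\sigma(F,B_{N,m})\bigr|\le 2M_F\,c(m)+L_F\,\omega(m)\quad\text{for } N>n_m .
\]

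This estimate splits into two steps. First, the rank part: by Cauchy interlacing for singular values, if $\mathrm{rank}(R)\le r$ then $\sigma_{i+r}(X)\le\sigma_i(X+R)\le\sigma_{i-r}(X)$. So the sorted singular values of $A_N$ and $B_{N,m}+N_{N,m}$ differ only by an index shift of at most $d_N c(m)$. A counting argument, using that $F$ is bounded and of bounded variation after approximation, then gives a discrepancy of $O(M_F c(m))$ in the normalized sum. Second, the norm part: Weyl's perturbation inequality $|\sigma_i(X+E)-\sigma_i(X)|\le\|E\|$ together with the Lipschitz property gives a discrepancy of at most $L_F\omega(m)$.

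Next I take limits. Letting $N\to\infty$, $\Sigma_\sigma(F,B_{N,m})\to\frac1{\mu\{K\}}\int_K F(|h_m|)$. Because $h_m\to h$ in measure and $F$ is bounded and uniformly continuous, dominated convergence along subsequences gives $\int_K F(|h_m|)\to\int_K F(|h|)$. Combining this with the estimate, $\limsup_N$ and $\liminf_N$ of $\Sigma_\sigma(F,A_N)$ both lie within $2M_Fc(m)+L_F\omega(m)+o_m(1)$ of $\frac1{\mu\{K\}}\int_K F(|h|)$. Sending $m\to\infty$ yields $\{A_N\}\sim_\sigma(h,K)$. The reduction from general $F\in\mathcal{C}_0$ to Lipschitz $F$ is a standard $\epsilon/3$ argument, because all the measures involved are probability measures.

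In the Hermitian case I would repeat the argument verbatim, with two substitutions. Cauchy interlacing for eigenvalues of Hermitian matrices replaces the singular value version, since a rank-$r$ Hermitian perturbation shifts sorted eigenvalue indices by at most $r$. Weyl's inequality $|\lambda_i(X+E)-\lambda_i(X)|\le\|E\|$ for Hermitian $X,E$ replaces the singular value version. All eigenvalues are real, so test functions may be restricted to $\mathcal{C}_0(\mathbb R)$.

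The main obstacle is the rank-perturbation step: turning index interlacing into a bound on $\frac1{d_N}\sum F(\cdot)$. I would handle it by a cleaner route. Write $F=F_+-F_-$ with $F_\pm$ monotone pieces on the support, or alternatively use the counting functions $\#\{i:\sigma_i\le t\}$, which interlacing shifts by at most $r$ uniformly in $t$. Integrating $F'$ against the difference of the empirical distribution functions then gives the bound $\|F'\|_{L^1}\, r/d_N$.
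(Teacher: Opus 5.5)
The paper does not prove this proposition; it quotes it from \cite{capizzano2001distribution} as a black box. Your proposal is therefore a self-contained replacement for a citation rather than an alternative to an argument in the text. Your route is the classical perturbation argument, and it is correct:
\begin{enumerate}
\item reduce to Lipschitz test functions with bounded support;
\item control the small-norm term by Weyl's inequality $|\sigma_i(X+E)-\sigma_i(X)|\le\|E\|$;
\item control the low-rank term by interlacing;
\item use that $h_m\to h$ in measure, with $F$ bounded and uniformly continuous, gives $\int_K F(|h_m|)\to\int_K F(|h|)$;
\item let $N\to\infty$, then $m\to\infty$, and only then the approximation parameter tend to zero. This order matters because your constants depend on the Lipschitz approximant.
\end{enumerate}
The Hermitian case goes through with eigenvalue interlacing and Weyl's inequality. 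That is exactly where the hypothesis that $R_{N,m}$ and $N_{N,m}$ are Hermitian is needed.

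Compared with the citation, your version gives an explicit quantitative bound, $\limsup_{N}|\Sigma_\sigma(F,A_N)-\Sigma_\sigma(F,B_{N,m})|\le \|F'\|_{L^1}c(m)+L_F\,\omega(m)$. It shows directly how the a.c.s. parameters control the distribution, without any GLT machinery.

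One correction is needed. Your first displayed estimate, with rank term $2M_F\,c(m)$, is false, and the sentence deriving it from ``$F$ bounded'' does not work. Boundedness of $F$ gives no control under even a rank-one perturbation. Take $X$ positive diagonal with distinct entries and $v$ with no zero components. The eigenvalues of $X+vv^*$, which are also its singular values, strictly interlace those of $X$. Now choose a continuous $F$ with bounded support, $0\le F\le 1$, vanishing at the old values and equal to $1$ at the new ones. Then $\Sigma_\sigma(F,\cdot)$ changes by $1$, whereas your bound would allow only $2/d_N$.

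The variation of $F$ has to enter, and your last paragraph already does this correctly. The counting functions differ by at most $r$ uniformly in $t$, so the rank term is at most $\|F'\|_{L^1}\,\mathrm{rank}(R_{N,m})/d_N\le \|F'\|_{L^1}c(m)$. This is finite for Lipschitz $F$ with bounded support and vanishes as $m\to\infty$ for fixed $F$. Replace $2M_F$ by $\|F'\|_{L^1}$ in the display, and the rest of your proof stands as written.
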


\begin{definition}[\textbf{sparsely unbounded matrix sequence}]\label{s.u.}
A matrix sequence $\{A_N\}$ is said to be sparsely unbounded \index{Sparsely unbounded (s.u.)\newline matrix-sequence}\index{Matrix-sequence!sparsely unbounded (s.u.)}\index{S.u.}(s.u.), $A_{N}$ of size $d_N$ ($d_k<d_{k+1}$ for each $k\in \mathbb{N}$),
 if for every $M>0$ there exists $n_M$ such that, for $N\ge n_M$,
$$\frac{\#\{i\in\{1,\ldots,d_N\}:\ \sigma_i(A_N)>M\}}{d_N}\le r(M),$$
where $\lim_{M\to\infty}r(M)=0$.
\end{definition}

\begin{definition}[\textbf{sparsely vanishing matrix sequence}]\label{s.v.}
A matrix-sequence $\{A_N\}$ is said to be \index{Sparsely vanishing (s.v.)\newline matrix-sequence}\index{Matrix-sequence!sparsely vanishing (s.v.)}sparsely vanishing \index{S.v.}(s.v.), $A_{N}$ of size $d_N$ ($d_k<d_{k+1}$ for each $k\in \mathbb{N}$), if for every $M>0$ there exists $n_M$ such that, for $N\ge n_M$,
$$\frac{\#\{i\in\{1,\ldots,d_N\}:\ \sigma_i(A_N)<1/M\}}{d_N}\le r(M),$$
where $\lim_{M\to\infty}r(M)=0$.
\end{definition}
It is clear that if $\{A_N\}$ is s.v.\ then $\{A_N^\dag\}$ is s.u.; it suffices to recall that the singular values of the pseudo-inverse $A^\dag$ are $1/\sigma_1(A),\ldots,1/\sigma_r(A),0,\ldots,0$, where $\sigma_1(A)\ldots\sigma_r(A)$ are the nonzero singular values of $A$ ($r={\rm rank}(A)$).

\begin{proposition}\label{a.c.s.-product - prop 5.5}
Let $\{A_N\},\,\{A_N'\}$ be s.u.\ matrix sequences, $A_{N}, A_N'$ of size $d_N$ ($d_k<d_{k+1}$ for each $k\in \mathbb{N}$), and suppose that
\begin{itemize}
	\item $\{\{B_{N,m}\}:\ m\}\stackrel{\rm a.c.s.}{\longrightarrow}\{A_N\}$,
	\item $\{\{B'_{N,m}\}:\ m\}\stackrel{\rm a.c.s.}{\longrightarrow}\{A_N'\}$,
\end{itemize}
both in the sense of Definition \ref{appr:seq}. Then $\{\{B_{N,m}B'_{N,m}\}:\ m\}\stackrel{\rm a.c.s.}{\longrightarrow}\{A_N A_N'\}$.
\end{proposition}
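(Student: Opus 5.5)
Proposition~\ref{a.c.s.-product - prop 5.5} asks us to show that $\{\{B_{N,m}B'_{N,m}\}:\ m\}$ is an a.c.s.\ for $\{A_NA_N'\}$. The plan is to write $A_NA_N'-B_{N,m}B'_{N,m}$ explicitly and split it into a low-rank part plus a small-norm part. Using the splittings
\[
A_N=B_{N,m}+R_{N,m}+N_{N,m},\qquad A_N'=B'_{N,m}+R'_{N,m}+N'_{N,m},
\]
we get
\[
A_NA_N'-B_{N,m}B'_{N,m}=(A_N-B_{N,m})A_N'+B_{N,m}(A_N'-B'_{N,m}).
\]
The terms involving $R_{N,m}$ or $R'_{N,m}$ have rank at most $d_N(c(m)+c'(m))$. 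The only delicate terms are $N_{N,m}A_N'$ and $B_{N,m}N'_{N,m}$, because neither $A_N'$ nor $B_{N,m}$ need be bounded in norm. This is where the s.u.\ hypothesis enters, and it is the main obstacle.

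To handle $N_{N,m}A_N'$ we truncate $A_N'$ using its SVD. Fix a threshold $M_m\to\infty$, chosen below, and write $A_N'=U\Sigma V^*$. Split $\Sigma=\Sigma_{\le M_m}+\Sigma_{>M_m}$, keeping in the first part the singular values $\le M_m$ and in the second those $>M_m$. Then
\[
A_N'=\widehat A'_{N}+\widetilde A'_{N},\qquad \|\widehat A'_N\|\le M_m,
\]
and by the s.u.\ property, for $N$ large, ${\rm rank}(\widetilde A'_N)\le d_N\, r'(M_m)$. Consequently $N_{N,m}A_N'$ is the sum of a matrix of norm at most $\omega(m)M_m$ and a matrix of rank at most $d_N r'(M_m)$.

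The term $B_{N,m}N'_{N,m}$ needs the same treatment for $B_{N,m}$, but we only know that $A_N$ is s.u. So we first show that $\{B_{N,m}\}$ is "almost" s.u.\ uniformly in $m$. Write $B_{N,m}=A_N-R_{N,m}-N_{N,m}$. By the interlacing inequalities for singular values, $\sigma_{i+k+j-2}(X+Y+Z)\le\sigma_i(X)+\sigma_j(Y)+\sigma_k(Z)$, together with $\|N_{N,m}\|\le\omega(m)\le1$ for large $m$, the number of singular values of $B_{N,m}$ exceeding $M_m+1$ is at most $d_N\bigl(r(M_m)+c(m)\bigr)$. Truncating $B_{N,m}$ in the same way therefore gives a bounded part of norm at most $M_m+1$ and a remainder of rank at most $d_N(r(M_m)+c(m))$. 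Hence $B_{N,m}N'_{N,m}$ is the sum of a matrix of norm at most $(M_m+1)\omega'(m)$ and a matrix of rank at most $d_N(r(M_m)+c(m))$.

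Collecting all pieces, the rank of the remainder is bounded by
\[
d_N\bigl(2c(m)+c'(m)+r(M_m)+r'(M_m)\bigr),
\]
and the norm of the small part by
\[
M_m\,\omega(m)+(M_m+1)\,\omega'(m).
\]
We choose $M_m=\bigl(\max\{\omega(m),\omega'(m)\}\bigr)^{-1/2}$, or $M_m=m$ if that maximum vanishes. Then $M_m\to\infty$, so $r(M_m),r'(M_m)\to0$, while $M_m\omega(m)\to0$ and $(M_m+1)\omega'(m)\to0$. The threshold $n_m$ is taken as the maximum of the original thresholds and those $n_{M_m}$ from the s.u.\ definitions, which depend only on $m$. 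This verifies Definition~\ref{appr:seq} for the product.
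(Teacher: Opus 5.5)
Your proof is correct. It takes a somewhat different route from the one behind the paper's statement. The paper itself gives no argument here; it imports the result from \cite{GLT-I}. The natural argument there substitutes $B_{N,m}=A_N-R_{N,m}-N_{N,m}$ and $B'_{N,m}=A'_N-R'_{N,m}-N'_{N,m}$ directly into the product, which gives
\[
A_NA_N'-B_{N,m}B'_{N,m}=A_N(R'_{N,m}+N'_{N,m})+(R_{N,m}+N_{N,m})A_N'-(R_{N,m}+N_{N,m})(R'_{N,m}+N'_{N,m}).
\]
In this form only the s.u.\ sequences $A_N$ and $A'_N$ appear as unbounded factors. They are each split once via item~3 of Proposition~\ref{s.u.pro}, with a threshold $M(m)\to\infty$ chosen so that $M(m)(\omega(m)+\omega'(m))\to 0$. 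The last product is trivially a term of rank at most $(c(m)+c'(m))d_N$ plus a term of norm at most $\omega(m)\omega'(m)$.

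Your two-term identity $(A_N-B_{N,m})A_N'+B_{N,m}(A_N'-B'_{N,m})$ instead puts $B_{N,m}$ in front of $N'_{N,m}$. That is why you need the extra Weyl-inequality step showing that $B_{N,m}$ has at most $d_N(r(M)+c(m))$ singular values above $M+\omega(m)$. That step is valid. Your bookkeeping of thresholds $n_m$, the rank total $2c(m)+c'(m)+r(M_m)+r'(M_m)$, the norm total, and the choice of $M_m$ (including the degenerate case) all check out. It also gives a by-product of independent interest: the sequences $\{B_{N,m}\}_N$ obey an s.u.-type bound uniformly in $m$, up to the vanishing term $c(m)$.

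The step is avoidable, though. Writing $B_{N,m}N'_{N,m}=A_NN'_{N,m}-R_{N,m}N'_{N,m}-N_{N,m}N'_{N,m}$ reduces that term to the same SVD truncation you already performed for $N_{N,m}A_N'$, at the cost of only an extra $\omega(m)\omega'(m)$ in the norm bound. So the standard route is shorter and uses nothing beyond subadditivity of rank and norm.
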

It is worth stressing that the aforementioned distribution results have nice consequences on the cluster structure of the spectrum of the considered matrix sequences. We now report the definition of spectral clustering (of eigenvalues and singular values) and the relations with the spectral distribution notion.
\begin{definition}\label{def-cluster}
{\rm A matrix sequence $\{A_N\}$, $A_{N}$ of size $d_N$ ($d_k<d_{k+1}$ for each $k\in \mathbb{N}$), is {\em properly $($or strongly$)$ clustered at $s \in
\mathbb C$} (in the eigenvalue sense), if for any $\epsilon>0$ the
number of the eigenvalues of $A_N$ off the disk
\begin{eqnarray*}
D(s,\epsilon):=\{z:|z-s|<\epsilon\},
\end{eqnarray*}
can be bounded by a pure constant $q_\epsilon$ possibly depending
on $\epsilon$, but not on $N$. In other words
\begin{eqnarray*}
q_\epsilon(N,s):=\#\{i: \lambda_i^{(N)}\notin D(s,\epsilon)\}=O(1),
\quad N\to\infty.
\end{eqnarray*}
If every $A_N$ has only real eigenvalues (at least for all $N$
large enough), then $s$ is real and the disk $D(s,\epsilon)$
reduces to the interval $(s-\epsilon,s+\epsilon)$. Furthermore,
$\{A_N\}$ is {\em properly $($or strongly$)$ clustered at a
nonempty closed set $S \subset \mathbb C$} (in the eigenvalue
sense) if for any $\epsilon>0$
\begin{equation*} \label{2.2}
q_\epsilon(N,S):=\#\{i: \lambda_i^{(N)}\not\in
D(S,\epsilon)\}=O(1), \quad N\to\infty,
\end{equation*}
$D(S,\epsilon):=\cup_{s\in S} D(s,\epsilon)$ is the
$\epsilon$-neighborhood of $S$, and if every $A_N$ has only real
eigenvalues, then $S$ has to be a nonempty closed subset of
$\mathbb R$. Finally, the term ``properly (or strongly)'' is
replaced by ``weakly'', if
\begin{eqnarray*}
q_\epsilon(N,s)=o(d_N), \qquad \bigl(q_\epsilon(N,S)=o(d_N)\bigr),
\quad N\to\infty,
\end{eqnarray*}
in the case of a point $s$ (a closed set $S$), respectively.
The same notions apply to singular values: in that case $s$ has to
be a real nonnegative number and $S$ a subset of the real
nonnegative numbers.}
\end{definition}
\begin{remark}\label{clusters vs distributions}
{\rm To link the concept of cluster with the distribution notions, it is
instructive to observe that $\{A_N\}\sim_{\lambda} (h,K)$, with
$h\equiv c$ being a constant function, is equivalent to write that
$\{A_N\}$ is weakly clustered at $c \in \mathbb C$ in the sense of the eigenvalues.
Analogously, $\{A_N\}\sim_{\sigma} (h,K)$, with
$h\equiv \hat c$ being a nonnegative constant function, is equivalent to write that
$\{A_N\}$ is weakly clustered at $\hat c \in {\mathbb R}^+_0$ in the sense of the singular values.
Furthermore, by looking at Definition \ref{distribution}, if we write $\{A_N\}\sim_{\lambda} (c,K)$, $c \in \mathbb C$, or $\{A_N\}\sim_{\sigma} (\hat c,K)$, $\hat c \in {\mathbb R}^+_0$, respectively, then it is true that $\{A_N\}\sim_{\lambda} (c,\tilde K)$, $c \in \mathbb C$, or $\{A_N\}\sim_{\sigma} (\hat c,\tilde K)$, $\hat c \in {\mathbb R}^+_0$, for every measurable $\tilde K$ with finite and positive measure $\mu\{\tilde K\}$, as required by Definition \ref{distribution}.
In other words, in the case of a constant distribution function, we can simplify the notation by dropping the indication of the domain since any admissible domain is allowed: hence we write $\{A_N\}\sim_{\lambda} c$ instead of $\{A_N\}\sim_{\lambda} (c,K)$ or $\{A_N\}\sim_{\sigma} \hat c$ instead of
$\{A_N\}\sim_{\sigma} (\hat c,K)$.}
\end{remark}

On the other hand, if $h$ is a generic measurable function and
$\{A_N\}\sim_{\lambda} (h,K)$, then the essential range\footnote{ Given a measurable complex-valued function $h$ defined on a
Lebesgue measurable set $K$, the {\em essential range of $h$} is
the set ${S}(h)$ of points $s\in \mathbb C$ such that, for every
$\varepsilon>0$, the Lebesgue measure of the set
$h^{(-1)}(D(s,\varepsilon)):=\{t\in K:\, h(t)\in
D(s,\varepsilon)\}$ is positive \cite{rudin1987real}.} of $h$,
denoted by $S(h)$, is a weak cluster for the spectra of the matrix
sequence $\{A_N\}$.
In order to have strong clustering we need to apply Proposition \ref{lem},
where concerning the rank correction matrix $R_{n,m}$ in Definition \ref{appr:seq} we have
rank$(R_{N,m})\leq c(m)$  instead of rank$(R_{N,m})\leq d_{N}c(m)$.
Furthermore, it should be noticed that that the notion of strong clustering at zero of
a matrix sequence $\{A_N\}$ is intimately related to the notion of collective compactness
\cite{anselone1971collectively} of  $\{{\mathcal L}_N\}$, with ${\mathcal L}_N$ the linear operator associated with the matrix $A_N$.

The following results, proven in \cite{serra2005deduce} for part 1 and then refined in \cite{al2014singular} for part 2, are the crucial theoretical tool used to describe the spectral properties of the scattering operator $\Sigma_N$.

\begin{theorem}\label{lem1} Let $\{A_{N}\}$ be a sequence of matrices, $A_N$ of size $d_N$ ($d_k<d_{k+1}$ for each $k\in \mathbb{N}$). The following two statements are true.
\begin{enumerate}
\item Assume that there exists $M>0$, independent of $N$
such that $\|A_{N}\|_{S,p}\leq M$,  for a certain norm Schatten $p$, $1\le p<\infty$. Then the sequence $\{A_{N}\}$ has a strong cluster at zero, both in the sense of singular values and eigenvalues.
\item If $\|A_{N}\|_{S,p}^p =o(d_N) $, for a certain norm Schatten $p$, $p<\infty$, then the sequence $\{A_{N}\}$ has a weak cluster at zero in the sense of singular values; if, moreover, the sequence $\{A_{N}\}$ is uniformly bounded in spectral norm, then it is also weakly clustered at zero in the eigenvalue sense.
\end{enumerate}
\end{theorem}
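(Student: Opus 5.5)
For the singular values in both parts, the plan is to count how many $\sigma_i$ can exceed a threshold, using a Markov (Chebyshev) type inequality on the Schatten $p$ norm. For the eigenvalues, we then transfer the bound via Weyl's majorant inequalities relating eigenvalues to singular values.

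\textbf{Part 1, singular values.} Fix $\epsilon>0$ and let $q_\epsilon(N)=\#\{i:\sigma_i(A_N)\ge\epsilon\}$. Then
\[
q_\epsilon(N)\,\epsilon^p\le\sum_{i}\sigma_i^p=\|A_N\|_{S,p}^p\le M^p,
\]
so $q_\epsilon(N)\le (M/\epsilon)^p$, a constant independent of $N$. This is exactly a strong cluster at zero in the singular value sense (Definition~\ref{def-cluster}).

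\textbf{Part 1, eigenvalues.} Order the eigenvalues so that $|\lambda_1|\ge|\lambda_2|\ge\cdots$. Weyl's majorization (see \cite{bhatia2013matrix}) states that $\sum_{i\le k}|\lambda_i|^p\le\sum_{i\le k}\sigma_i^p$ for every $k$ and every $p>0$; for $p\ge1$ this follows from log-majorization, since $t\mapsto e^{pt}$ is convex and increasing. Hence $\sum_i|\lambda_i|^p\le \|A_N\|_{S,p}^p\le M^p$, and the same counting gives $\#\{i:|\lambda_i|\ge\epsilon\}\le (M/\epsilon)^p$. This yields a strong eigenvalue cluster at zero.

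\textbf{Part 2, singular values.} The same Markov bound gives $q_\epsilon(N)\le \epsilon^{-p}\|A_N\|_{S,p}^p=o(d_N)$, which is a weak cluster in the singular value sense.

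\textbf{Part 2, eigenvalues.} The Weyl inequality $\sum_i|\lambda_i|^p\le\|A_N\|_{S,p}^p$ again yields $\#\{|\lambda_i|\ge\epsilon\}\le\epsilon^{-p}\,o(d_N)=o(d_N)$ for $p\ge1$, so at first sight the uniform spectral norm bound is not even needed. The main obstacle is the range $0<p<1$, and more generally making the argument robust if one only wants to use the counting statement. Here I would use the spectral norm bound $\|A_N\|\le C$ explicitly. Since $\sigma_i\le C$ for all $i$, we have $\sigma_i^{p'}\le C^{p'-p}\sigma_i^p$ for $p'\ge p$, so $\|A_N\|_{S,p'}^{p'}\le C^{p'-p}\|A_N\|_{S,p}^p=o(d_N)$. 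Choosing $p'=\max(p,1)$ reduces to the convex case treated above.

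An alternative eigenvalue route, which I would keep as a back-up, splits $A_N$ via its singular value decomposition as $A_N=R_N+E_N$. Here $R_N$ keeps the $o(d_N)$ singular values that are $\ge\epsilon$, so $\mathrm{rank}(R_N)=o(d_N)$, and $\|E_N\|<\epsilon$. One then invokes the known fact that a sequence of the form (small norm)$+$(small rank), uniformly bounded in norm, is weakly eigen-clustered at zero. I expect the first route to suffice.
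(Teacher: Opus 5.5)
Your proof is correct. The paper gives no argument of its own here; it quotes part~1 from \cite{serra2005deduce} and part~2 from \cite{al2014singular}. What you supply is therefore a self-contained replacement, built from two ingredients:
\begin{itemize}
\item the Markov-type count $\#\{i:\sigma_i(A_N)\ge\epsilon\}\le\epsilon^{-p}\|A_N\|_{S,p}^p$ for singular values;
\item Weyl's majorant theorem $\sum_i|\lambda_i(A_N)|^p\le\sum_i\sigma_i(A_N)^p$, which transfers the same count to eigenvalues.
\end{itemize}
This Schatten-norm control is exactly what the nonnormal case needs, since a singular value cluster alone does not control eigenvalues.

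Your argument also proves more than part~2 states. The restriction to $p\ge1$ is unnecessary: $t\mapsto e^{pt}$ is convex and increasing for every $p>0$, so Weyl's inequality holds for all $p>0$, as you yourself write in part~1. The ``obstacle'' at $0<p<1$ does not exist, and your reduction to $p'=\max(p,1)$ via $\|A_N\|\le C$ is correct but redundant. Hence
$$\#\{i:|\lambda_i(A_N)|\ge\epsilon\}\le\epsilon^{-p}\|A_N\|_{S,p}^p=o(d_N)$$
holds with no spectral-norm assumption at all. The uniform boundedness in the statement is sufficient but superfluous for the weak eigenvalue cluster.

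That hypothesis is genuinely needed only along your back-up route. Splitting $A_N$ into small rank plus small norm discards the Schatten control on the large singular values. For instance, take $n^{-1}Z_n+n^{n-1}e_1e_n^{T}$ with $Z_n$ the down-shift. Its singular values are $1/n$ (multiplicity $n-1$) and $n^{n-1}$, so it has a strong singular value cluster at zero. Yet all its eigenvalues are $n$th roots of unity.

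Uniform boundedness is what turns a weak singular value cluster into $\|A_N\|_{S,1}=o(d_N)$. That is the input of trace-norm perturbation results such as those in \cite{golinskii2007asymptotic}, which is presumably why the hypothesis appears in the quoted statement. Your primary Weyl route avoids it and gives the cleaner, slightly stronger result, so it is the one to keep.
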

Assume that $k$ is a continuous bi-variate function, $k(\cdot,\cdot):\Omega\times\Omega\rightarrow\mathbb{R}$,
$\Omega\subset\mathbb{R}^{d}$, $d\geq1$, and suppose that
\begin{eqnarray}\label{inte}
  \int_{\Omega}k(x,y)f(y)\,\textrm{d}y=l(x),\qquad x\in\Omega.
\end{eqnarray}
If we consider $\Omega=[0,1]$ and we apply the rectangle rule for approximating the integral
(\ref{inte}), with step $H=1/N$, we have
\begin{eqnarray*}
  l(iH)=H\sum_{j=0}^{N-1}k(iH,jh)f(jH),\qquad i=0,\ldots,N-1.
\end{eqnarray*}
Let us define the matrix $A_{N}$ as
\begin{eqnarray}\label{an}
  A_{N}=H\left[k(iH,jH)\right]_{i,j=0}^{N-1}.
\end{eqnarray}
Such a matrix $A_{N}$ is the sampling matrix of the function $k$ scaled by $H$; clearly,
by varying $N$, that is the finesse parameter, we have a sequence $\{A_{N}\}$ of scaled sampling matrices.

\begin{theorem}\label{th-cluster} The sequence $\{A_{N}\}$, $A_{N}$ defined as in (\ref{an}),
shows a strong cluster at zero both in the singular value and eigenvalue sense, $\forall k\in {\cal C}(\Omega\times\Omega)$.
\end{theorem}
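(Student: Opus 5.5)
The plan is to show that $\{A_N\}$, with $A_N=H[k(iH,jH)]_{i,j=0}^{N-1}$ and $H=1/N$, is a finite-rank perturbation plus a uniformly small-norm perturbation. From such a splitting the strong cluster at zero follows, in both the singular value and the eigenvalue sense. The tool is uniform approximation of $k$ by separable (degenerate) kernels.

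\emph{Step 1 (separable approximation).} By Stone--Weierstrass on the compact set $\Omega\times\Omega$, for every $\epsilon>0$ there is a polynomial
$$k_\epsilon(x,y)=\sum_{r=1}^{m}a_r(x)b_r(y)$$
with $\|k-k_\epsilon\|_\infty<\epsilon$. Here $m=m(\epsilon)$ depends only on $\epsilon$, not on $N$.

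\emph{Step 2 (matrix splitting).} Write $A_N=R_N+E_N$, where $R_N=H[k_\epsilon(iH,jH)]_{i,j}$ and $E_N=H[(k-k_\epsilon)(iH,jH)]_{i,j}$.

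The matrix $R_N$ has rank at most $m$, because it equals $\sum_{r=1}^m H\,\mathbf a_r\mathbf b_r^T$, where $\mathbf a_r,\mathbf b_r$ are the sampling vectors of $a_r,b_r$.

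For $E_N$, every entry is bounded by $H\epsilon$. Hence
$$\|E_N\|_{S,2}=\Bigl(\sum_{i,j}|(E_N)_{ij}|^2\Bigr)^{1/2}\le NH\epsilon=\epsilon,$$
and therefore $\|E_N\|\le\epsilon$. Equivalently, one can bound $\|E_N\|\le\sqrt{\|E_N\|_1\|E_N\|_\infty}\le\epsilon$.

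\emph{Step 3 (singular values).} By the interlacing inequalities $\sigma_{i+j-1}(R+E)\le\sigma_i(R)+\sigma_j(E)$, we get $\sigma_{m+1}(A_N)\le\sigma_{m+1}(R_N)+\sigma_1(E_N)=\epsilon$. So at most $m(\epsilon)$ singular values exceed $\epsilon$, uniformly in $N$. This is exactly strong clustering at zero in the singular value sense.

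\emph{Step 4 (eigenvalues).} This is the main obstacle, since $A_N$ is not Hermitian in general and small singular values do not directly control eigenvalues. I would try two routes.

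The first route goes through Theorem~\ref{lem1}, part 1. The sequence is uniformly bounded, since $\|A_N\|\le\|A_N\|_{S,2}\le\|k\|_\infty$. Thus $\|A_N\|_{S,2}$ is bounded independently of $N$, and Theorem~\ref{lem1} applies with $p=2$, giving a strong cluster at zero for both singular values and eigenvalues. This essentially makes Steps 1–3 a sanity check, though they clarify the mechanism.

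The second route is a direct check of the eigenvalue part, using Weyl's majorant inequality $\sum_i|\lambda_i(A_N)|^2\le\sum_i\sigma_i(A_N)^2=\|A_N\|_{S,2}^2\le\|k\|_\infty^2$. The number of eigenvalues with $|\lambda|\ge\epsilon$ is then at most $\|k\|_\infty^2/\epsilon^2$, a constant independent of $N$.

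The general $\Omega\subset\mathbb R^d$ case with a tensor rectangle rule is identical: there are $d_N$ nodes and $H=1/d_N$ up to the measure of $\Omega$, and the Frobenius bound again gives $\|A_N\|_{S,2}\le|\Omega|\,\|k\|_\infty$.
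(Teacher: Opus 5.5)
Your argument is correct. The paper states Theorem~\ref{th-cluster} without proof (it is imported from the literature on sampling matrices). The mechanism it relies on elsewhere, in the proof of Theorem~\ref{th:main2} and in Remark~\ref{comput2}, is exactly your Step~4. There, the uniform bound $\|A_N\|_{S,2}^2=H^2\sum_{i,j}|k(iH,jH)|^2\le\|k\|_\infty^2$ (a Riemann sum for $\|k\|_{L^2}^2$) is fed into part~1 of Theorem~\ref{lem1}, whose eigenvalue half rests on the same Weyl majorization $\sum_i|\lambda_i|^2\le\sum_i\sigma_i^2$ you use in your second route.

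What you add is Steps~1--3, a genuinely different, a.c.s.-style argument for the singular values. The Stone--Weierstrass degenerate-kernel splitting gives an explicit approximant of rank $m(\epsilon)$, independent of $N$, with error of spectral norm below $\epsilon$. This is a constructive version of the low-rank approximation in Remark~\ref{comput}, which the paper derives from the SVD once the cluster is known. For smooth kernels, $m(\epsilon)$ can also be much smaller than the count $\|k\|_\infty^2/\epsilon^2$ given by the Frobenius bound (e.g.\ $O(\log(1/\epsilon))$ for analytic $k$).

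That route has two limitations. It says nothing about eigenvalues of these nonnormal matrices. And Step~1 breaks down for the merely Riemann-integrable kernels in the remark following the theorem, since a discontinuous $k$ cannot be a uniform limit of continuous separable kernels. That extension is the one the paper actually needs for the transfer blocks in Theorem~\ref{th:main3}, whose Volterra-type kernels jump across the diagonal. The Frobenius bound, by contrast, holds verbatim for any bounded kernel, so the Schatten-$2$ route is the one that carries the paper's argument.
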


\begin{remark}\label{general Omega and k}
{\rm While Theorem~\ref{th-cluster} is based on the use of the rectangular quadrature rule over the unit segment, the proof can be generalized and holds for $k(\cdot,\cdot)$ being only Riemann integrable, and the domain $\Omega$ being a general Peano–Jordan set in $\mathbb{R}^d$, and if the quadrature rule is different (see Section~4 in~\cite{al2014singular}).}
\end{remark}
\begin{remark}\label{comput}
{\rm The latter result has an interesting computational application, for example in terms of preconditioning. The use of the singular value decomposition, together with the result in Theorem \ref{th-cluster}, implies that for every positive $\epsilon$, there exist a matrix $A_{N,\epsilon}$ and a value $r_\epsilon$ such that rank$(A_{N,\epsilon})\le r_\epsilon$ and $\|A_N-A_{N,\epsilon}\|\le \epsilon$. In other words, the action of $A_N$ can be approximated, with $\epsilon$ error in  Euclidean norm, by a fast procedure having linear cost with respect to the size $N$ and with constant proportional to $r_\epsilon$ (see also \cite{reichel1990fast}).}
\end{remark}

\subsection{Theoretical results}\label{sec:results}
The key takeaway of this section is that the global coefficient matrix $A_N$ is a good asymptotic approximation in clustering terms and in the a.c.s. topology \cite{GLT-I} of the identity matrix, despite the dense nature of the large diagonal blocks. Given the tools reported in
the previous section, this is not completely a surprise since these blocks originate from the numerical approximation of compact integral operators.
This contribution provides a clear understanding of the reason why solving RT systems with coefficient matrix $A_N$ is robust when using Krylov methods, even without preconditioning and even for extremely large matrix-sizes.

From a technical point of view we first give general results on sparsely unbounded matrix sequences taken from \cite{GLT-III}, on their algebraic structure, and on their relationships with zero-distributed matrix sequence. Then we proceed with the analysis of RT coefficient matrices.

\begin{proposition}\label{s.u.pro}
Let $\{A_N\}$ be a matrix sequence. The following conditions are equivalent.
\begin{enumerate}
	\item $\{A_N\}$ is s.u.
	\item $\displaystyle\lim_{M\to\infty}\limsup_{N\to\infty}\frac{\#\{i\in\{1,\ldots,d_N\}:\ \sigma_i(A_N)>M\}}{d_N}=0$.
	\item For every $M>0$ there exists $n_M$ such that, for $N\ge n_M$,
	\begin{equation*}
	A_N=\hat A_{N,M}+\tilde A_{N,M},\qquad{\rm rank}(\hat A_{N,M})\le r(M)d_N,\qquad \|\tilde A_{N,M}\|\le M,
	\end{equation*}
	where $\lim_{M\to\infty}r(M)=0$.
\end{enumerate}
\end{proposition}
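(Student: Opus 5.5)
The proof is a cycle of implications $1\Rightarrow 2\Rightarrow 3\Rightarrow 1$, using only Definition~\ref{s.u.} and the SVD.

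\emph{$1\Rightarrow 2$.} Assume $\{A_N\}$ is s.u. For each $M>0$ we have, for $N\ge n_M$,
\[
\frac{\#\{i:\sigma_i(A_N)>M\}}{d_N}\le r(M).
\]
Hence the $\limsup_{N\to\infty}$ of this ratio is at most $r(M)$. Letting $M\to\infty$ and using $r(M)\to 0$ gives condition 2.

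\emph{$2\Rightarrow 3$.} Fix $M>0$. Write the SVD $A_N=U_N\,\mathrm{diag}(\sigma_i)\,V_N^*$ and split the singular values at the threshold $M$:
\[
\hat A_{N,M}=U_N\,\mathrm{diag}\bigl(\sigma_i\,\mathbf{1}_{\{\sigma_i>M\}}\bigr)V_N^*,\qquad \tilde A_{N,M}=A_N-\hat A_{N,M}.
\]
Then $\|\tilde A_{N,M}\|\le M$, and $\mathrm{rank}(\hat A_{N,M})=\#\{i:\sigma_i(A_N)>M\}$.

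It remains to manufacture a function $r(M)$ valid for all $N\ge n_M$. Define
\[
\ell(M)=\limsup_{N\to\infty}\frac{\#\{i:\sigma_i(A_N)>M\}}{d_N},\qquad r(M)=\ell(M)+\frac1M.
\]
By the definition of $\limsup$, there is $n_M$ such that the ratio is at most $r(M)$ for all $N\ge n_M$. Condition 2 says $\ell(M)\to0$, so $r(M)\to 0$ as well. This step is the main (mild) obstacle: the slack $1/M$ converts the limsup statement into a uniform bound from some $n_M$ on.

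\emph{$3\Rightarrow 1$.} Given the splitting $A_N=\hat A_{N,M}+\tilde A_{N,M}$, apply the Weyl interlacing inequality for singular values under perturbation:
\[
\sigma_{i+j-1}(X+Y)\le\sigma_i(X)+\sigma_j(Y).
\]
Take $X=\hat A_{N,M}$ with $k=\mathrm{rank}(\hat A_{N,M})$ and $Y=\tilde A_{N,M}$. Choosing $i=k+1$ and $j=1$ gives $\sigma_{k+1}(A_N)\le 0+M=M$. Therefore at most $k\le r(M)d_N$ singular values of $A_N$ exceed $M$, for all $N\ge n_M$. This is exactly Definition~\ref{s.u.} with the same $r(M)$, which closes the cycle.
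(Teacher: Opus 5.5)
Your proof is correct. The cycle $1\Rightarrow2\Rightarrow3\Rightarrow1$ is the standard argument: for $2\Rightarrow3$ you truncate the SVD at threshold $M$ with slack $r(M)=\ell(M)+1/M$, and for $3\Rightarrow1$ you use the Weyl/interlacing bound $\sigma_{k+1}(A_N)\le\|\tilde A_{N,M}\|\le M$. The paper gives no proof of this proposition and imports it from the GLT monographs \cite{GLT-III,GLT-I}, where it is proved in essentially the same way.
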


\begin{proposition}\label{d->su - prop 5.4}
Assume $(h,K)$ from Definition \ref{distribution} exists: if $\{A_N\}\sim_\sigma (h,K)$ then $\{A_N\}$ is s.u. 
\end{proposition}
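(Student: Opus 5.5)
We want to show that if $\{A_N\}\sim_\sigma (h,K)$ with $h$ measurable on $K$, $0<\mu\{K\}<\infty$, then $\{A_N\}$ is s.u. The plan is to verify condition 2 of Proposition~\ref{s.u.pro}, namely
\[
\lim_{M\to\infty}\limsup_{N\to\infty}\frac{\#\{i:\ \sigma_i(A_N)>M\}}{d_N}=0 .
\]
The idea is to count the singular values that lie in a large bounded window using a test function, and let the symbol control the rest.

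For fixed $M>0$ we choose a continuous, compactly supported cutoff $F_M\in\mathcal C_0(\mathbb R^+_0)$ with $0\le F_M\le 1$, $F_M\equiv 1$ on $[0,M]$ and $F_M\equiv 0$ on $[M+1,\infty)$ (piecewise linear in between). Every $\sigma_i(A_N)\le M$ contributes $1$ to $\sum_i F_M(\sigma_i(A_N))$. Hence
\[
\frac{\#\{i:\ \sigma_i(A_N)>M\}}{d_N}\le 1-\Sigma_\sigma(F_M,A_N).
\]
By Definition~\ref{distribution}, the right-hand side converges as $N\to\infty$ to
\[
1-\frac{1}{\mu\{K\}}\int_K F_M(|h(x)|)\,\mathrm dx=\frac{1}{\mu\{K\}}\int_K \bigl(1-F_M(|h(x)|)\bigr)\,\mathrm dx .
\]
Since $1-F_M(|h|)\le \chi_{\{|h|>M\}}$, the $\limsup$ of the counting ratio is at most $\mu\{x\in K:|h(x)|>M\}/\mu\{K\}$.

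The only delicate point is that this measure tends to $0$. We use that $h$ is measurable and, as a symbol in Definition~\ref{distribution}, finite almost everywhere. The sets $\{|h|>M\}$ then decrease to $\{|h|=\infty\}$, which is null. Because $\mu\{K\}<\infty$, continuity from above of the measure gives
\[
\lim_{M\to\infty}\mu\{|h|>M\}=0 .
\]
This yields condition 2, and Proposition~\ref{s.u.pro} concludes.

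If one prefers the formulation of Definition~\ref{s.u.} directly, the same estimate works. Take
\[
r(M)=\frac{2\,\mu\{|h|>M\}}{\mu\{K\}}+\frac1M,
\]
and choose $n_M$ from the convergence of $\Sigma_\sigma(F_M,A_N)$; this $r(M)$ tends to $0$.
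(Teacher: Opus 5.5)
Your overall strategy matches the standard proof in the GLT monographs \cite{GLT-I,GLT-III}, from which the paper imports this proposition without proving it: test the singular value distribution against a continuous cutoff, then use $\mu\{x\in K:\ |h(x)|>M\}\to 0$. Your measure-theoretic part is fine (finiteness of $h$, continuity from above on a set of finite measure, the appeal to Proposition~\ref{s.u.pro}). However, the central counting inequality points the wrong way for the cutoff you chose. Since $F_M\ge 0$ and $F_M\equiv 1$ on $[0,M]$, what actually follows is $\Sigma_\sigma(F_M,A_N)\ge \#\{i:\ \sigma_i(A_N)\le M\}/d_N$, that is
\[
1-\Sigma_\sigma(F_M,A_N)\le \frac{\#\{i:\ \sigma_i(A_N)>M\}}{d_N}.
\]
This is a lower bound on the quantity you need to bound from above. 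Singular values in $(M,M+1)$ count as exceeding $M$ on your left-hand side, yet still contribute positively to $\Sigma_\sigma(F_M,A_N)$. For example, if all singular values of $A_N$ equal $M+\tfrac12$, your left-hand side is $1$ while $1-\Sigma_\sigma(F_M,A_N)=\tfrac12$.

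The fix is to put the transition region of the cutoff below the threshold, so that $F_M$ is dominated by the indicator of $[0,M]$. Take $F_M$ continuous with $0\le F_M\le 1$, $F_M\equiv 1$ on $[0,M/2]$ and $F_M\equiv 0$ on $[M,\infty)$. Now every $\sigma_i(A_N)>M$ contributes $0$ and every other one at most $1$. Hence $\Sigma_\sigma(F_M,A_N)\le \#\{i:\ \sigma_i(A_N)\le M\}/d_N$, your inequality becomes true, and passing to the limit gives
\[
\limsup_{N\to\infty}\frac{\#\{i:\ \sigma_i(A_N)>M\}}{d_N}\le\frac{1}{\mu\{K\}}\int_K\bigl(1-F_M(|h(x)|)\bigr)\,\mathrm{d}x\le\frac{\mu\{x\in K:\ |h(x)|>M/2\}}{\mu\{K\}}.
\]
This tends to $0$ as $M\to\infty$ by your continuity-from-above argument. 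In the direct version via Definition~\ref{s.u.}, take for instance $r(M)=\mu\{x\in K:\ |h(x)|>M/2\}/\mu\{K\}+1/M$. Alternatively, keep your $F_M$, bound $\#\{i:\ \sigma_i(A_N)\ge M+1\}$ instead, and shift $M$. With this correction your proof coincides with the standard one.
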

\begin{proposition}\label{d0->sv - prop 8.4}
Assume $(h,K)$ from Definition \ref{distribution} exists: if $\{A_N\}\sim_\sigma (h,K)$ then $\{A_N\}$ is s.v.\ if and only if $h\ne0$ a.e. on $K$.
\end{proposition}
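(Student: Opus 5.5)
Proposition~\ref{d0->sv - prop 8.4} claims that, when $\{A_N\}\sim_\sigma (h,K)$, $\{A_N\}$ is s.v.\ if and only if $h\ne 0$ a.e.\ on $K$. The plan is to test the distribution relation of Definition~\ref{distribution} against continuous functions concentrated near the origin. These functions count the fraction of small singular values, and that fraction is exactly what Definition~\ref{s.v.} controls. For $\delta>0$ we use the counting quantity $c_N(\delta)=\#\{i:\sigma_i(A_N)<\delta\}/d_N$. We also fix continuous, compactly supported test functions $F_\delta,G_\delta:\mathbb{R}^+_0\to[0,1]$ such that
\[
\chi_{[0,\delta/2]}\le G_\delta\le \chi_{[0,\delta)} \le F_\delta\le\chi_{[0,2\delta)}.
\]
Then $\Sigma_\sigma(G_\delta,A_N)\le c_N(\delta)\le \Sigma_\sigma(F_\delta,A_N)$. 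Passing to the limit with $\{A_N\}\sim_\sigma(h,K)$ gives
\[
\frac{\mu\{|h|\le\delta/2\}}{\mu\{K\}}\le\liminf_N c_N(\delta)\le\limsup_N c_N(\delta)\le \frac{\mu\{|h|<2\delta\}}{\mu\{K\}}.
\]

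\textbf{Step 1 ($h\ne0$ a.e.\ $\Rightarrow$ s.v.).} The sets $\{|h|<2\delta\}$ decrease as $\delta\downarrow0$ to $\{h=0\}$, which has measure zero. Since $\mu\{K\}<\infty$, continuity of measure from above gives $\mu\{|h|<2\delta\}\to0$. Put $\delta=1/M$ and define $r(M)=2\mu\{|h|<2/M\}/\mu\{K\}+1/M$. The limsup bound then provides $n_M$ such that $c_N(1/M)\le r(M)$ for $N\ge n_M$, and $r(M)\to0$. This is exactly Definition~\ref{s.v.}.

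\textbf{Step 2 (s.v.\ $\Rightarrow$ $h\ne0$ a.e.).} Suppose instead that $\mu\{h=0\}=\eta>0$. The lower bound gives $\liminf_N c_N(1/M)\ge \mu\{|h|\le 1/(2M)\}/\mu\{K\}\ge \eta/\mu\{K\}$ for every $M$. The s.v.\ property, however, gives $c_N(1/M)\le r(M)$ for all large $N$. Choosing $M$ with $r(M)<\eta/\mu\{K\}$ produces a contradiction.

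The only delicate point is the sandwich of the discontinuous indicator between continuous test functions. Definition~\ref{distribution} requires $F\in\mathcal{C}_0(\mathbb{R}^+_0)$, so the bounds on $c_N(\delta)$ come only in the liminf/limsup form with a shift of the threshold from $\delta$ to $\delta/2$ and $2\delta$. This is harmless, because both endpoint sets shrink monotonically to $\{h=0\}$ as $\delta\downarrow 0$.
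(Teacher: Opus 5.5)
Your proof is correct. Sandwiching $c_N(\delta)$ between $\Sigma_\sigma(G_\delta,A_N)$ and $\Sigma_\sigma(F_\delta,A_N)$, and then using continuity of measure on $K$ (which has finite measure), yields both implications with the exact threshold convention of Definition~\ref{s.v.}. The paper does not prove this proposition itself but imports it from the Garoni--Serra-Capizzano GLT monograph, where, as far as I recall, the standard argument is this same comparison with cutoff test functions.
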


%\begin{proposition}\label{s.u.p - prop 2.19}If $\{A_N\},\{A_N'\}$ are s.u.\ then $\{A_N A_N'\}$ is s.u.
%\end{proposition}

\begin{theorem}\label{th:s.u. *-algebra}
Let $\{d_N\}$ be a sequence of positive integers with $d_k<d_{k+1}$ for each $k\in \mathbb{N}$ and let ${\mathcal M}_{k}$ be the the space of square complex matrices of size $k$.
   Let ${\mathcal A}=\left\{\{A_N\}_N, A_N\in {\mathcal M}_{d_N}\right\}$ be the $*$-algebra of matrix sequences and let
\[
{\mathcal SU}=\left\{\{A_N\}_N, A_N\in {\mathcal M}_{d_N},\ \{A_N\}_N\ s.u. \right\}.
\]
Then ${\mathcal SU}\subset {\mathcal A}$ is a $*$-algebra as well.
\end{theorem}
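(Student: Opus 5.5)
We have to show that ${\mathcal SU}$ is closed under sums, scalar multiples, products and conjugate transposition, and that it contains the identity sequence. The identity sequence $\{I\!d_{d_N}\}$ is clearly s.u., since all its singular values equal $1$. The natural tool is the characterization in Proposition~\ref{s.u.pro}, item 3: $\{A_N\}$ is s.u. if and only if, for every $M>0$ and all large $N$,
\[
A_N=\hat A_{N,M}+\tilde A_{N,M},\qquad {\rm rank}(\hat A_{N,M})\le r(M)d_N,\qquad \|\tilde A_{N,M}\|\le M,
\]
with $r(M)\to 0$. Each algebraic operation will be checked by manipulating such splittings.

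\textbf{Adjoint and scalars.} The adjoint is immediate: $A_N^*$ has the same singular values as $A_N$, so Definition~\ref{s.u.} transfers verbatim. Equivalently, $A_N^*=\hat A_{N,M}^*+\tilde A_{N,M}^*$ preserves both the rank and the norm bounds. For a scalar $c\neq0$, the singular values of $cA_N$ are $|c|\sigma_i(A_N)$. Hence the counting function at level $M$ for $cA_N$ equals the counting function at level $M/|c|$ for $A_N$, and we may take $r'(M)=r(M/|c|)\to0$. The case $c=0$ is trivial.

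\textbf{Sum.} Given $M$, split $A_N$ and $B_N$ at level $M/2$:
\[
A_N+B_N=(\hat A_{N,M/2}+\hat B_{N,M/2})+(\tilde A_{N,M/2}+\tilde B_{N,M/2}).
\]
The first term has rank at most $(r_A(M/2)+r_B(M/2))d_N$, and the second has spectral norm at most $M$. Since $r_A(M/2)+r_B(M/2)\to0$ as $M\to\infty$, item 3 of Proposition~\ref{s.u.pro} applies, taking $N$ beyond the maximum of the two thresholds.

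\textbf{Product.} Split both factors at level $\sqrt M$ and write
\[
A_NB_N=\hat A\,B_N+\tilde A\,\hat B+\tilde A\,\tilde B ,
\]
dropping indices. The first two terms have rank at most $\bigl(r_A(\sqrt M)+r_B(\sqrt M)\bigr)d_N$, because the rank of a product is bounded by the rank of either factor. The last term satisfies $\|\tilde A\tilde B\|\le \sqrt M\cdot\sqrt M=M$. The sum of the two rank fractions tends to zero, so $\{A_NB_N\}$ is s.u.

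The only delicate point is being careful with the quantifiers in item 3: the thresholds $n_M$ must be chosen uniformly, and the rank-fraction function must tend to zero as $M\to\infty$. Everything else is routine. Alternatively, the product case could use the singular value inequality $\sigma_{i+j-1}(AB)\le\sigma_i(A)\sigma_j(B)$ directly in Definition~\ref{s.u.}, but the splitting route is cleaner.

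Finally, ${\mathcal SU}$ is nonempty and contains the identity, so these closure properties make it a $*$-subalgebra of ${\mathcal A}$.
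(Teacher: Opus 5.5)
Your proof is correct. The scalar and sum steps coincide with the paper's: you rescale the threshold by $|c|$, split both sequences at level $M/2$ in item 3 of Proposition~\ref{s.u.pro}, and use subadditivity of rank and norm.

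The difference is in the product. The paper gives no self-contained argument there. It cites external results, or alternatively invokes Proposition~\ref{a.c.s.-product - prop 5.5} together with the equivalence of items 1 and 3 in Proposition~\ref{s.u.pro}. Implicitly, this treats the bounded parts $\tilde A_{N,m}$, $\tilde B_{N,m}$ as an a.c.s.\ for $\{A_N\}$, $\{B_N\}$, since they differ from them only by small-rank terms. Then $\{\tilde A_{N,m}\tilde B_{N,m}\}$ is an a.c.s.\ for $\{A_NB_N\}$, and one reads off a rank-plus-bounded-norm splitting of the product.

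Your expansion $A_NB_N=\hat A B_N+\tilde A\hat B+\tilde A\tilde B$ at level $\sqrt{M}$ reaches the same splitting directly. It uses only $\mathrm{rank}(XY)\le\min\{\mathrm{rank}\,X,\mathrm{rank}\,Y\}$ and submultiplicativity of the spectral norm. So it is shorter and avoids the a.c.s.\ product theorem, whose own proof relies on such s.u.\ splittings anyway. The paper's route has the merit of fitting the a.c.s.\ framework it reuses in Theorem~\ref{th: zero-distr ideal of s.u. *-algebra}.

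You also explicitly check closure under conjugate transposition, which the paper's proof omits even though the $*$-structure requires it. As you note, it is immediate because $A_N$ and $A_N^*$ have the same singular values.
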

{\bf Proof:}\
the claimed $*$-algebra thesis amounts in the following two items. If $\{A_N\},\{A_N'\}$ are s.u.\ and $\alpha, \beta$ are given complex constants then
\begin{enumerate}
\item $\{A_N A_N'\}$ is s.u.,
\item $\{\alpha A_N + \beta A_N'\}$ is s.u.
\end{enumerate}
For the closure of s.u. matrix sequences under multiplication, i.e. item 1, use \cite[Proposition 2.18, p. 50]{GLT-III}, whose proof can be found in \cite{GLT-I}; see also \cite[Propositions 2.19 and 2.20, p.50]{GLT-III}, whose proofs are given in detail in \cite{garoni2019block}. In terms of the tools introduced here, it is enough to invoke Proposition \ref{a.c.s.-product - prop 5.5} and the equivalence between part 1 and part 3 of Proposition \ref{s.u.pro}.

For the linear combination, the property follows from the characterization of s.u. matrix sequence as a sum of low-rank plus norm-norm terms (see the third part of Proposition \ref{s.u.pro}, in the spirit of the a.c.s. topology) plus the sublinear character of norm and rank.
More precisely, $\{A_N\},\{A_N'\}$ s.u.\ imply $\{\alpha A_N\},\{\beta A_N'\}$ s.u. In fact, by Definition \ref{s.u.}, for $\alpha \neq 0$ and for proving that $\{\alpha A_N\}$ is s.u., it is enough to replace the term $M$ with the term $|\alpha| M$, which can be done safely since $\alpha$ is a fixed nonzero constant. When $\alpha=0$ there is nothing to prove since a zero matrix sequence is of course uniformly bounded, and hence s.u. In the same way, we deduce that $\{\beta A_N'\}$ is s.u.

Hence, the proof of item 2 is reduced to show that a sum of two s.u.\ matrix sequences $\{X_N\},\{Y_N'\}$ is still s.u., taking $X_N=\alpha A_N$ and $Y_N=\beta A_N'$.
By item 3 in Proposition \ref{s.u.pro}, we deduce that for every $M>0$ there exists $n_M$ such that, for $N\ge n_M$,
	\begin{eqnarray*}
	X_N & = & \hat X_{N,M}+\tilde X_{N,M},\qquad{\rm rank}(\hat X_{N,M})\le r_X(M)d_N,\qquad \|\tilde X_{N,M}\|\le M, \\
Y_N & = & \hat Y_{N,M}+\tilde Y_{N,M},\qquad{\rm rank}(\hat Y_{N,M})\le r_Y(M)d_N,\qquad \|\tilde Y_{N,M}\|\le M,
	\end{eqnarray*}
	where $\lim_{M\to\infty}r_X(M)=\lim_{M\to\infty}r_Y(M)=0$.
Now we observe that both rank and norm are sublinear and hence
\[
Z_N=X_N+Y_N = \hat X_{N,M/2}+\hat Y_{N,M/2} + \tilde X_{N,M/2}+\tilde X_{N,M/2},
\]
with rank$(\hat X_{N,M/2})\le r_X(M/2)d_N$, rank$(\hat Y_{N,M/2})\le r_Y(M/2)d_N$, $\|\tilde X_{N,M/2}\|\le M/2$, $\|\tilde Y_{N,M/2}\|\le M/2$.
Therefore
	\begin{eqnarray*}
	Z_N & = & \hat Z_{N,M}+\tilde Z_{N,M},\qquad{\rm rank}(\hat Z_{N,M})\le (r_X(M/2)+r_Y(M/2))d_N,\qquad \|\tilde Z_{N,M}\|\le M,
	\end{eqnarray*}
with $\hat Z_{N,M}=\hat X_{N,M/2}+\hat Y_{N,M/2}$, $\tilde Z_{N,M}=\tilde X_{N,M/2}+\tilde Y_{N,M/2}$, $r_Z(M)=r_X(M/2)+r_Y(M/2)$ and
	where $\lim_{M\to\infty}r_Z(M)=0$.
In conclusion, again by virtue of item 3 in Proposition \ref{s.u.pro}, we infer $\{Z_N\}$  s.u.\ with $Z_N=\alpha A_N+ \beta A_N'$.
\hfill $\bullet$
\ \\
The following result has been essentially proved in  \cite[Lemma 3.5]{capizzano2003analysis}, using the tools available at that time. Below we give an a.c.s. based proof.

\begin{theorem}\label{th: zero-distr ideal of s.u. *-algebra}
Let ${\mathcal A}$ and ${\mathcal SU}$ as in Theorem \ref{th:s.u. *-algebra}. \\Let ${\mathcal O}=\left\{\{A_N\}_N, A_N\in {\mathcal M}_{d_N},\ \{A_N\}_N
\sim_\sigma 0 \right\}$. Then  ${\mathcal O}$ is a two-sided ideal of ${\mathcal SU}$.
\end{theorem}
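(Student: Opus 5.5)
We have to show three things: $\mathcal O\subset\mathcal SU$; $\mathcal O$ is a linear subspace closed under $*$ (so it is a sub-$*$-algebra); and $\{A_NB_N\},\{B_NA_N\}\in\mathcal O$ whenever $\{A_N\}\in\mathcal O$ and $\{B_N\}\in\mathcal SU$. Throughout we use the following a.c.s. characterization of zero-distribution. By Definition~\ref{distribution} with $h\equiv 0$ and Remark~\ref{clusters vs distributions}, $\{A_N\}\sim_\sigma 0$ means that for every $\epsilon>0$ the number of singular values exceeding $\epsilon$ is $o(d_N)$. Truncating the SVD, this is equivalent to the following: for every $m$ there is $n_m$ with
\[
A_N=R_{N,m}+N_{N,m},\qquad \mathrm{rank}(R_{N,m})\le d_N c(m),\qquad \|N_{N,m}\|\le \omega(m),\qquad N>n_m ,
\]
where $c(m),\omega(m)\to0$. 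In other words, the zero sequence is an a.c.s. for $\{A_N\}$ in the sense of Definition~\ref{appr:seq}. The converse direction also holds. One can use Proposition~\ref{lem}, since $\{0\}\sim_\sigma 0$ and $h_m\equiv0$ converges to $0$. Alternatively one can count directly with the interlacing inequality $\sigma_{i+r}(X+Y)\le\sigma_i(X)+\sigma_{r+1}(Y)$.

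\textbf{Step 1 (inclusion).} $\mathcal O\subset\mathcal SU$ follows immediately from Proposition~\ref{d->su - prop 5.4} with $h\equiv0$. Closure under adjoint is trivial, because $A_N^*$ has the same singular values as $A_N$. Closure under linear combinations follows the proof of item 2 in Theorem~\ref{th:s.u. *-algebra}. Write $\alpha A_N+\beta A_N'$ as $(\alpha R_{N,m}+\beta R'_{N,m})+(\alpha N_{N,m}+\beta N'_{N,m})$. The rank of the first term is at most $(c(m)+c'(m))d_N$, and the norm of the second is at most $|\alpha|\omega(m)+|\beta|\omega'(m)$; both bounds tend to $0$. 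The characterization above then gives zero-distribution.

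\textbf{Step 2 (ideal property).} This is the core of the argument. Let $\{A_N\}\in\mathcal O$ and $\{B_N\}\in\mathcal SU$. The quickest route is Proposition~\ref{a.c.s.-product - prop 5.5}. Both sequences are s.u., by Step 1. The zero sequence is an a.c.s. for $\{A_N\}$, and $\{B_N\}$ itself (constant in $m$) is an a.c.s. for $\{B_N\}$. The proposition then yields $\{\{0\cdot B_N\}:m\}=\{0\}\stackrel{\rm a.c.s.}{\longrightarrow}\{A_NB_N\}$, so $\{A_NB_N\}\sim_\sigma0$ by Proposition~\ref{lem}. The same argument, with the factors in the other order, handles $\{B_NA_N\}$.

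\textbf{Direct check of Step 2.} To keep the argument self-contained, and to guard against any hidden hypothesis in Proposition~\ref{a.c.s.-product - prop 5.5}, we can also verify the product directly. Use item 3 of Proposition~\ref{s.u.pro} to write $B_N=\hat B_{N,M}+\tilde B_{N,M}$, with $\mathrm{rank}(\hat B_{N,M})\le r(M)d_N$ and $\|\tilde B_{N,M}\|\le M$. Then
\[
A_NB_N = A_N\hat B_{N,M} + R_{N,m}\tilde B_{N,M} + N_{N,m}\tilde B_{N,M}.
\]
The first two terms have total rank at most $(r(M)+c(m))d_N$, and the last has norm at most $M\omega(m)$. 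We now pick $M=M(m)\to\infty$ slowly enough that $M(m)\omega(m)\to0$, for instance $M(m)=\omega(m)^{-1/2}$ (or $M(m)=m$ if $\omega\equiv0$). Then both $r(M(m))+c(m)\to0$ and $M(m)\omega(m)\to0$. The threshold $n$ must be taken as the maximum of the two thresholds $n_m$ and $n_{M(m)}$.

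The main obstacle is this coupling of the two parameters. A s.u. factor is not uniformly bounded, so one cannot simply multiply the norm bound by $\|B_N\|$. The low-rank part of $B_N$ has to be absorbed into the rank correction, and $M$ and $m$ have to be balanced so that both the rank fraction and the small-norm part still vanish. The left product $B_NA_N$ is handled symmetrically, or by taking adjoints: $B_NA_N=(A_N^*B_N^*)^*$, with $\{A_N^*\}\in\mathcal O$ and $\{B_N^*\}\in\mathcal SU$.
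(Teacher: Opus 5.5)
Your proposal is correct and follows the paper's route. The paper's proof likewise combines Proposition~\ref{a.c.s.-product - prop 5.5} and Proposition~\ref{d->su - prop 5.4} with the fact that $\{A_N\}\sim_\sigma 0$ exactly when the trivial class links $\{A_N\}$ and the zero sequence in the a.c.s.\ sense: the paper takes the constant class $\{\{A_N\}:m\}$ approximating $\{O_N\}$, you take $\{0\}$ approximating $\{A_N\}$, and the two are equivalent. Your Step~1 (inclusion in ${\mathcal SU}$ and closure under linear combinations and adjoints, which the paper leaves implicit) and your direct rank/norm splitting with $M(m)=\omega(m)^{-1/2}$, which is Proposition~\ref{a.c.s.-product - prop 5.5} unfolded in this special case, make the argument fully self-contained.
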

{\bf Proof:}\  We have to prove that a s.u. matrix sequence multiplied either on the left or on the right zero-distributed matrix sequence is still zero-distributed.
Use Propositions 5.4—5.5 in the book \cite{GLT-I} and the fact that the constant a.c.s. $\{\{A_N\}: m\}$ is a.c.s. for the zero matrix sequence $\{O_N\}$ if and only if $\{A_N\}$ is zero-distributed (and this fact follows by virtue of the equivalence below Equation (5.6) at p. 70 in \cite{GLT-I}).
\hfill $\bullet$
\ \\

Now, after the general results, we study our coefficient RT matrices $\{A_N\}$.
\begin{theorem}\label{th:main1}
    Let the coefficient $\gamma=\sigma/2\chi$ from Equation~\eqref{eq:S} be Riemann integrable in general sense (possibly undounded), the domain $\mathcal{D}$ a Peano–Jordan measurable set in $\mathbb{R}^{2d}$, and the scattering kernel $\Phi(\bm{x},\bm{r},\bm{r}')$ be Riemann integrable for any $\bm{x}\in \mathcal{D}$. Then, the sequence of scattering operators $\{\Sigma_N\}$, defined in \eqref{eq:Sigma}, shows a weak cluster at zero in the singular value sense, i.e.
$$\{\Sigma_N\}\sim_\sigma (0,K)$$
for $K$ as in Definition~\ref{distribution}.
\end{theorem}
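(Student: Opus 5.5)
The plan is to exploit the factorization $\Sigma_N=\Gamma_N\Psi_N W_N$ from \eqref{eq:Sigma}, together with the block-diagonal structure in space, and to reduce the statement to Theorem~\ref{lem1} and Theorem~\ref{th-cluster}. The bounded case comes first; the generalized-Riemann (possibly unbounded) $\gamma$ is handled at the end through the ideal property of Theorem~\ref{th: zero-distr ideal of s.u. *-algebra}.

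\emph{Step 1: the kernel part.} Consider $\Psi_N W_N=\mathrm{blockDiag}_{i=1}^{N_s}(\Psi_{[i]}W_r)$. Each block $\Psi_{[i]}W_r$ is exactly a scaled sampling matrix of the kind in (\ref{an}): it samples the kernel $\Phi(\tau_i,\cdot,\cdot)$ on the ray grid and weights it by the quadrature weights $w_k/N_r$. By Remark~\ref{general Omega and k}, this sampling is allowed for a Riemann integrable kernel on a Peano--Jordan set and for a general quadrature rule.

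The quantitative bound comes from the Frobenius norm. Since $\Phi$ is Riemann integrable, it is bounded, say $|\Phi|\le C_\Phi$ uniformly in $\bm{x}$. With weights satisfying $w_k=O(1)$ we obtain
\[
\|\Psi_{[i]}W_r\|_{S,2}^2=\sum_{k,l}\frac{w_l^2}{N_r^2}\Phi(\tau_i,\bm r_k,\bm r_l)^2\le C,
\]
with $C$ independent of $N_r$ and of $i$. This is the Riemann-sum argument of \cite{al2014singular}. Summing over the blocks gives
\[
\|\Psi_NW_N\|_{S,2}^2\le C N_s=o(N),
\]
since $N=N_sN_r$ and $N_r\to\infty$. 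I treat the refinement of the ray grid as simultaneous with the refinement in space. If $N_r$ stays bounded, the scattering integral is not really being refined and the statement would be false, so this is an implicit assumption I would make explicit.

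The $o(d_N)$ Schatten bound then gives, by part 2 of Theorem~\ref{lem1}, that $\{\Psi_NW_N\}\sim_\sigma 0$.

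\emph{Step 2: the coefficient $\Gamma_N$.} Because $\Gamma_N$ is diagonal, its singular values are the values $|\gamma(\tau_i,\bm r_k)|$. If $\gamma$ is bounded, $\{\Gamma_N\}$ is uniformly bounded in spectral norm and hence s.u.

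If $\gamma$ is only Riemann integrable in the generalized sense, it has an integrable singularity, and the diagonal samples form a sequence that is distributed as $\gamma$. More elementarily, the fraction of samples exceeding $M$ tends to the measure of $\{|\gamma|>M\}$, which vanishes as $M\to\infty$. Either by Proposition~\ref{d->su - prop 5.4} or directly from Definition~\ref{s.u.}, $\{\Gamma_N\}$ is therefore s.u.

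\emph{Step 3: conclusion.} By Theorem~\ref{th: zero-distr ideal of s.u. *-algebra}, the zero-distributed sequences form a two-sided ideal in $\mathcal{SU}$. Hence $\{\Gamma_N\Psi_NW_N\}=\{\Sigma_N\}$ is zero-distributed, which is the claim $\{\Sigma_N\}\sim_\sigma (0,K)$ for any admissible $K$, by Remark~\ref{clusters vs distributions}.

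\emph{Expected obstacle.} The delicate step is Step 1 in the fully general setting. There the kernel depends on the spatial parameter, and the quadrature on rays couples directions and frequencies on a product domain of dimension $2d$. The point is to ensure that the Frobenius bound on each block holds \emph{uniformly in $i$}, which needs a uniform bound on $\Phi$ and bounded normalized weights, and that $N_s=o(N)$. If a uniform bound fails, I would instead approximate $\Phi$ by bounded truncations and build an a.c.s. as in Definition~\ref{appr:seq}: the truncated part is zero-distributed, and the remainder is small in norm outside a low-rank correction. I would then pass to the limit with Proposition~\ref{lem}.
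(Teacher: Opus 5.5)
Your proposal is correct and follows the paper's own proof. Like the paper, you factor $\Sigma_N=\Gamma_N\Psi_NW_N$, obtain that $\{\Gamma_N\}$ is s.u.\ from its distribution as $\gamma$, obtain $\{\Psi_NW_N\}\sim_\sigma 0$ from its sampling-matrix structure, and conclude with the two-sided ideal property of zero-distributed sequences inside $\mathcal{SU}$. The only difference is that you replace the paper's one-line appeal to (\ref{an}) by the explicit block-wise bound $\|\Psi_NW_N\|_{S,2}^2\le CN_s=o(N)$ combined with part~2 of Theorem~\ref{lem1}; this makes visible the two hypotheses the paper's argument also silently needs, namely $N_r\to\infty$ and a bound on $\Phi$ that is uniform in $\bm{x}$.
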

{{\bf Proof:}\ by definition we have $\Sigma_N=\Gamma_N\Psi_NW_N$, where $\Gamma_N$ is simply the diagonal matrix sampling the function $\gamma$.
Now by Axiom {\bf GLT 3}, part 2, in \cite[p. 170]{GLT-I} we have $\{\Gamma_N\}\sim_{\rm GLT} (\gamma,K)$ and by Axiom {\bf GLT 1} in \cite[p. 170]{GLT-I} we infer $\{\Gamma_N\}\sim_{\sigma} (\gamma,K)$ so that $\{\Gamma_N\}$ is s.u.\ by Proposition \ref{d->su - prop 5.4}.
Now, $\Psi_NW_N$ discretizes the integral operator containing the kernel $\Phi$, as in \eqref{an}, and hence $\{\Psi_NW_N\}\sim_\sigma (0,K)$, which is equivalent to write
$\{\Psi_NW_N\}\in {\mathcal O}$. Since ${\mathcal O}$ is a two-sided ideal of ${\mathcal SU}$ by Theorem \ref{th: zero-distr ideal of s.u. *-algebra} we deduce that
\[
\{\Sigma_N\}\sim_{\sigma} (0,K)
\]
that is $\{\Sigma_N\}\sim_{\sigma} 0$ which is equivalent to the weak clustering at zero.
%Now use Theorem~\ref{th-cluster} for concluding for the eigenvalues.
\hfill $\bullet$
}

\begin{theorem}\label{th:main2}
    Let the coefficient $\gamma=\sigma/2\chi$ be bounded and Riemann integrable, the domain $\mathcal{D}$ a Peano–Jordan measurable set in $\mathbb{R}^{2d}$, and the scattering kernel $\Phi(\bm{x},\bm{r},\bm{r}')$ be Riemann integrable for any $\bm{x}\in K$. Then, the sequence of scattering operators $\{\Sigma_N\}$, defined in \eqref{eq:Sigma}, shows a strong cluster at zero both in the singular value and eigenvalue sense, i.e.
$$\{\Sigma_N\}\sim_\sigma (0,K),\qquad  \{\Sigma_N\}\sim_\lambda(0,K),$$
for $K$ as in Definition~\ref{distribution}.
\end{theorem}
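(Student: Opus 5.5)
The plan is to upgrade the argument of Theorem~\ref{th:main1} from the s.u./ideal framework to a direct norm argument. With $\gamma$ bounded, the diagonal factor $\Gamma_N$ is uniformly bounded in spectral norm: $\|\Gamma_N\|\le \|\gamma\|_\infty$, independently of $N$. The strong cluster will then follow from the strong cluster of the kernel part $\Psi_N W_N$, which Theorem~\ref{th-cluster} together with Remark~\ref{general Omega and k} provides for Riemann integrable kernels on Peano--Jordan domains with general quadrature rules. The distribution statements $\{\Sigma_N\}\sim_\sigma 0$ and $\{\Sigma_N\}\sim_\lambda 0$ are then consequences of strong clustering, since strong clustering implies weak clustering, which is equivalent to zero distribution by Remark~\ref{clusters vs distributions}. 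The singular value part is in any case already covered by Theorem~\ref{th:main1}.

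\textbf{Singular values.} From Remark~\ref{comput}, the strong cluster at zero of $\{\Psi_N W_N\}$ in the singular value sense means the following: for every $\epsilon>0$ there is a splitting
\[
\Psi_N W_N = R_{N,\epsilon} + E_{N,\epsilon},
\]
with $\mathrm{rank}(R_{N,\epsilon})\le r_\epsilon$, where $r_\epsilon$ is independent of $N$, and $\|E_{N,\epsilon}\|\le \epsilon$. This follows from a truncated SVD, keeping the $q_\epsilon$ singular values above $\epsilon$. Multiplying on the left by $\Gamma_N$ gives
\[
\Sigma_N=\Gamma_N R_{N,\epsilon}+\Gamma_N E_{N,\epsilon},
\]
where $\mathrm{rank}(\Gamma_N R_{N,\epsilon})\le r_\epsilon$ and $\|\Gamma_N E_{N,\epsilon}\|\le \|\gamma\|_\infty\epsilon$. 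By the interlacing (Weyl) inequalities for singular values, $\sigma_{r_\epsilon+1}(\Sigma_N)\le \|\gamma\|_\infty\epsilon$. Hence at most $r_\epsilon$ singular values exceed $\|\gamma\|_\infty\epsilon$. Since $\epsilon$ is arbitrary, this is the strong singular value cluster at zero.

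\textbf{Eigenvalues.} This is the delicate step, because $\Sigma_N$ is not Hermitian in general: $\Gamma_N\Psi_N W_N$ is a product of a symmetric matrix with diagonal ones. I would try two routes, in this order.

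\emph{Similarity route.} If $w_k>0$ and $\gamma>0$, then
\[
\Sigma_N \sim W_N^{1/2}\Gamma_N^{1/2}\,\Psi_N\,\Gamma_N^{1/2}W_N^{1/2},
\]
which is symmetric. For a symmetric matrix the eigenvalue moduli equal the singular values, so it suffices to repeat the singular value argument for this similar matrix. The factor $\Gamma_N^{1/2}$ is bounded because $\gamma$ is. The remaining point is that $W_N^{1/2}\Psi_N W_N^{1/2}$ is again a kernel-sampling matrix of the type covered by Remark~\ref{general Omega and k}, with scaling $\sqrt{w_iw_j}$ in place of $w_j$.

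\emph{General route.} Without positivity, use a majorization argument. The decomposition above gives the bound $\|\Sigma_N\|_{S,1}\le C$ on a suitable part. More directly, apply the standard fact that a rank-$r$ perturbation of a matrix with norm at most $\delta$ has at most $r$ eigenvalues outside the disk $D(0,\delta)$. This is the eigenvalue analogue of Weyl's inequalities (Weyl's majorant theorem $\prod_{i\le k}|\lambda_i|\le\prod_{i\le k}\sigma_i$, applied as in \cite{serra2005deduce}), and it is the mechanism behind part 1 of Theorem~\ref{lem1}. Concretely, if $k$ eigenvalues of $\Sigma_N$ have modulus at least $\eta$, then
\[
\eta^k\le \prod_{i\le k}\sigma_i(\Sigma_N)\le \|\Sigma_N\|^{r_\epsilon}\,(\|\gamma\|_\infty\epsilon)^{k-r_\epsilon}.
\]
Here $\|\Sigma_N\|$ is uniformly bounded, because $\|\Psi_NW_N\|$ is bounded, as assumed for $\Gamma_N\Psi_N$ after~\eqref{eq:A_N}. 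Choosing $\epsilon$ small relative to $\eta$ bounds $k$ by a constant independent of $N$. This gives the strong eigenvalue cluster at zero.
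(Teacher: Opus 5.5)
\textbf{The gap.} The step that fails is the input you take from Theorem~\ref{th-cluster} and Remark~\ref{general Omega and k}: that $\{\Psi_N W_N\}$ (or, in your similarity route, $\{W_N^{1/2}\Psi_N W_N^{1/2}\}$) is strongly clustered at zero, so that the truncated SVD has a rank $r_\epsilon$ independent of $N$.

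By \eqref{eq:Sigma}, $\Psi_N W_N=\mathrm{blockDiag}_{i=1}^{N_s}(\Psi_{[i]}W_r)$. This is not the scaled sampling matrix of a Riemann integrable kernel on the whole phase space as in \eqref{an}, because it only couples rays at the same spatial node. Correspondingly, the continuous scattering operator acts multiplicatively in $\bm{x}$ and is not compact on functions over all of $\mathcal{D}$. Theorem~\ref{th-cluster} can therefore only be applied block by block, as $N_r\to\infty$. That gives $q_\epsilon$ outliers per block, hence up to $N_s q_\epsilon$ in total.

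This bound is sharp. Take $\Phi\equiv 1$, $\gamma\equiv 1/2$, and Gauss--Legendre angular weights normalized as in the paper, so that $\frac{1}{N_r}\sum_k w_k=2$. Each block equals $\frac{1}{2N_r}\mathbf{1}\mathbf{w}^T$. Its only nonzero eigenvalue is $\frac{1}{2N_r}\sum_k w_k=1$, and its only nonzero singular value is $\ge 1$ by Cauchy--Schwarz. So $\Sigma_N$ has exactly $N_s$ eigenvalues equal to $1$; with the experiments' choice $\gamma=(1-t)/2$ they fill $[0,1]$ instead. Hence, once $N_s\to\infty$, no $N$-independent $r_\epsilon$ exists, and the claimed strong cluster is false rather than merely unproved. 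Even the weak cluster requires $N_r\to\infty$, since the outlier fraction here is $N_s/N=1/N_r$.

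\textbf{Comparison with the paper.} Your route differs from the paper's. The paper combines the uniform spectral-norm bound with $\|\Sigma_N\|_{S,2}=O(1)$ and invokes part~1 of Theorem~\ref{lem1}. That result yields the eigenvalue cluster of non-Hermitian matrices directly, through $\sum_i|\lambda_i|^2\le\sum_i\sigma_i^2$, so no similarity or majorant argument is needed. However, the paper's Frobenius bound breaks down for exactly the same reason: $\|\Sigma_N\|_{S,2}^2=\sum_{i=1}^{N_s}\|\Gamma_{[i]}\Psi_{[i]}W_r\|_{S,2}^2$ grows like $N_s$. Only the spectral norm, being a maximum over blocks, stays bounded.

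So the statement can be established only when $N_s$ stays bounded and just the ray grid is refined. In that regime your argument is correct: each block has uniformly bounded Frobenius norm, so the per-block counts add up to an $N$-independent $r_\epsilon$. The remaining steps then go through: the Weyl inequality $\sigma_{r_\epsilon+1}(\Sigma_N)\le\|\gamma\|_\infty\epsilon$, the diagonal similarity to a symmetric matrix when $\gamma$ and the $w_k$ are positive, and the product estimate.

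\textbf{A further correction.} The ``standard fact'' you cite, that a rank-$r$ perturbation of a matrix of norm at most $\delta$ has at most $r$ eigenvalues outside $D(0,\delta)$, is false. The $n\times n$ weighted cyclic shift with $\delta$ on the superdiagonal and $c>\delta$ in the corner has all $n$ eigenvalues of modulus $\delta(c/\delta)^{1/n}>\delta$. What is true, and sufficient, is your displayed consequence of Weyl's majorant theorem. It bounds the number of eigenvalues of modulus at least $\eta>\|\gamma\|_\infty\epsilon$ by a constant depending on $\eta$, $\epsilon$, $r_\epsilon$ and $\sup_N\|\Sigma_N\|$.
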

{{\bf Proof:}\ in the case of boundedness of the function $\gamma=\sigma/2\chi$, similarly to Theorem \ref{th-cluster}, one can prove that $\{\Sigma_N\}$ is collectively bounded so that $\|\Sigma_N\|_{S,\infty} =O(1)$,  as $N\rightarrow \infty$. Furthermore, a direct computation shows that $\|\Sigma_N\|_{S,2} =O(1)$ as $N\rightarrow \infty$. As a consequence, by invoking part 1 of Theorem \ref{lem1}, we deduce the strong clustering of the both singular values and eigenvalues.
\hfill $\bullet$
}

\begin{remark}\label{comput3}
{\rm If we remove the assumption of boundedness of the function $\gamma=\sigma/2\chi$ we do not maintain in general the strong clustering while the weak clustering and distribution still stand, but only in the singular value sense, as long as the function $\gamma$ is Riemann integrable in any compact set of the Peano–Jordan measurable set $K$. However, for the weak clustering of the eigenvalues one can use part 2 of Theorem \ref{lem1}; see also the refined analysis in \cite{al2014singular}.}
%$\mathcal{D}$}.
\end{remark}

\begin{theorem}\label{th:main3}
    The sequence of transfer operators $\{\Lambda_N\}_N$, defined in \eqref{eq:perm_L}, is zero-distributed with a strong singular value and eigenvalue clustering.
\end{theorem}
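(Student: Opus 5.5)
The plan is to get rid of the permutation first and then work ray by ray. Since $\Lambda_N=P^T\widetilde{\Lambda}_NP$ with $P$ a permutation, $\Lambda_N$ and $\widetilde{\Lambda}_N$ have the same eigenvalues and singular values. It therefore suffices to study the block-diagonal matrix $\widetilde{\Lambda}_N=\mathrm{blockDiag}_{k=1}^{N_r}(\Lambda_{[k]})$. Its spectrum and its singular values are the unions of those of the blocks $\Lambda_{[k]}=\Lambda^{\downarrow}(\bm{r}_k)$ or $\Lambda^{\uparrow}(\bm{r}_k)$. In the multidimensional case, $\Lambda_{[k]}=T_{\mathrm{CR},k}\Lambda_{\mathrm{R},k}T_{\mathrm{RC},k}$. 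The interpolation factors are uniformly bounded in spectral norm, as noted in the section on the global operator. Multiplying by them can only change norm-type estimates by constant factors, and Theorem \ref{th: zero-distr ideal of s.u. *-algebra} absorbs them at the level of zero-distribution. So the core work is on the 1D blocks of \eqref{ie_mono3}--\eqref{ie_mono4}.

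\textbf{Step 1: uniform spectral-norm bound.} I would show $\|\Lambda_N\|=O(1)$. From \eqref{ie_mono3}, writing $a_l=1/(1+\Delta\tau_{l-1})$, one has $f_{i,j}=(1-a_j)\prod_{l=j+1}^{i}a_l$. The row sums then telescope: $\sum_{j}f_{i,j}\le 1-\prod_l a_l\le 1$. An analogous estimate bounds the column sums, at least when the mesh is quasi-uniform in $\tau$. This gives bounded $\|\cdot\|_1$ and $\|\cdot\|_\infty$, and hence $\|\Lambda_{[k]}\|\le C$ uniformly in $k$ and $N$. In particular $\{\Lambda_N\}$ is s.u.

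\textbf{Step 2: eigenvalues.} Each block is triangular, so its eigenvalues are its diagonal entries, namely $0$ and $f_{j,j}=\Delta\tau_{j-1}/(1+\Delta\tau_{j-1})\le \max_j\Delta\tau_j$. Under grid refinement $\max\Delta\tau\to0$. Hence for every $\epsilon>0$ all eigenvalues eventually lie in $D(0,\epsilon)$. This gives a strong (indeed uniform) eigenvalue cluster at zero, and by Remark \ref{clusters vs distributions} also $\{\Lambda_N\}\sim_\lambda 0$.

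\textbf{Step 3: singular values.} Here I would use a Schatten-$2$ estimate on each block. Using $f_{i,j}\le \Delta\tau_{j-1}(1+\Delta\tau_{\min})^{-(i-j)}$, summing the geometric series in $i$ gives a sum of order $1/\Delta\tau$. Hence $\|\Lambda_{[k]}\|_{S,2}^2\lesssim\sum_j\Delta\tau_j=O(\tau_{\mathrm{deep}}-\tau_{\mathrm{surf}})$, uniformly in $k$ and in $N_s$. This is the discrete counterpart of the Hilbert--Schmidt character of the exponential Volterra kernel in \eqref{eq:rt_integral}. Consequently each block has at most $C/\epsilon^2$ singular values above $\epsilon$, and $\widetilde{\Lambda}_N$ has at most $N_rC/\epsilon^2$ of them. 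Since $N_r C/\epsilon^2=o(N)$ as $N_s\to\infty$, part 2 of Theorem \ref{lem1} yields $\{\Lambda_N\}\sim_\sigma 0$.

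\textbf{Main obstacle.} I expect the difficult point to be the \emph{strong} singular-value cluster. The count $N_rC/\epsilon^2$ is independent of $N$ only if $N_r$ stays bounded, or if the angular and frequency directions carry additional decay. I would first try to close this gap by exploiting that $\Lambda_N$ only enters in the product with $\Sigma_N$, whose quadrature weights $W_N$ give an extra factor of order $1/N_r$. Failing that, I would state the strong singular-value clustering for a fixed ray set, with the weak cluster (zero-distribution) holding in full generality.
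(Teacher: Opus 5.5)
\textbf{The strong singular-value clause cannot be proved because it is false.} The obstacle you flag at the end is genuine and cannot be closed. The strong singular-value clause fails as soon as $N_r\to\infty$, so what you can actually prove is your fallback. The detour through $\Sigma_N$ would establish something about $\Lambda_N\Sigma_N$, which is the content of Theorem~\ref{th:main4}, not about $\Lambda_N$.

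To see the failure, apply a block to the all-ones vector. On the paper's uniform $t$-grid, your telescoped row sums give $(\Lambda^{\downarrow}(\bm{r}_k)\mathbf{1})_i=1-(1+\Delta\tau_k)^{-(i-1)}$, and symmetrically for $\Lambda^{\uparrow}$. Hence $\sigma_1(\Lambda_{[k]})\ge\|\Lambda_{[k]}\mathbf{1}\|/\sqrt{N_s}\ge c(L_0)>0$, uniformly in $N_s$, whenever the optical length $L_k=(N_s-1)\Delta\tau_k$ of the ray is at least $L_0$. In the paper's 1D tests $L_k=\phi(\nu_k)/|\mu_k|\ge\min_F\phi>0$ for every ray, so $\#\{i:\sigma_i(\Lambda_N)\ge c\}\ge N_r$. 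If $N_s$ is frozen while $N_r$ grows, even $\{\Lambda_N\}\sim_\sigma 0$ fails.

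The paper's proof does not avoid this. It reduces to the blocks (``zero-distributed if and only if each block is'') and applies Theorem~\ref{th-cluster} to each block as a discretized Volterra operator, or, per Remark~\ref{comput2}, a per-block Schatten-$2$ bound, which is precisely your Step~3. That reduction is harmless for a fixed number of blocks, but when $N_r$ grows the per-block outlier counts add up to order $N_r$. The correct theorem therefore reads as follows. For a fixed ray set and $N_s\to\infty$, there are strong clusters in both senses, since $\|\Lambda_N\|_{S,2}^2\le\sum_k L_k/2=O(1)$ and part~1 of Theorem~\ref{lem1} applies. Otherwise there are only weak clusters, and these require $N_s\to\infty$.

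\textbf{Your bounds are not uniform over rays.} Steps~2 and~3 treat $\tau_{\mathrm{deep}}-\tau_{\mathrm{surf}}$ as ray-independent. In fact $\Delta\tau_k=\phi(\nu_k)\Delta t/|\mu_k|$, and the smallest Gauss--Legendre node is of order $N_\Omega^{-2}$, so $\max_k\Delta\tau_k\to0$ only if $N_\Omega^2=o(N_s)$. Otherwise a grazing block has $N_s-1$ eigenvalues equal to $\Delta\tau_k/(1+\Delta\tau_k)$, bounded away from zero, and the strong \emph{eigenvalue} cluster fails too. For instance, in the monochromatic test with $N_\Omega=100$ and $N_s=200$, the most grazing ray already has $\Delta\tau\approx 9$.

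Likewise, your geometric series is of order $1/\Delta\tau$ only for small $\Delta\tau$. The sharp per-block bound is $\|\Lambda_{[k]}\|_{S,2}^2\le(N_s-1)\Delta\tau_k/(2+\Delta\tau_k)$, which is not uniform in $k$. The weak statements survive if you sum over rays honestly: $\|\Lambda_N\|_{S,2}^2/N\le N_r^{-1}\sum_k\min(\Delta\tau_k/2,1)\to0$ as $N_s\to\infty$, uniformly in $N_\Omega,N_\nu$. This holds because only a vanishing fraction of Gauss--Legendre nodes lies in $[0,C/N_s]$. The same count bounds the fraction of diagonal entries, hence eigenvalues, exceeding any $\epsilon$.

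\textbf{What is sound.} Your Step~1 holds, and the triangular-eigenvalue observation is cleaner than the paper's appeal to Theorem~\ref{th-cluster}. On the uniform grid both row and column sums telescope to at most $1$, so $\|\Lambda_N\|\le 1$ with no quasi-uniformity caveat.
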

{{\bf Proof:}\  we first observe that $\Lambda_N$ is block-diagonal  (cf. Equation~\eqref{eq:lambda_block}), so that
$\{\Lambda_N\}_N$ is zero-distributed if and only if each matrix-sequence block is zero-distributed. We can use the explicit expression of the entries of each block, which are in the form $\Delta\tau_i/(1+\Delta\tau_i)^{i-j+1}$ according to \eqref{eq:perm_L}. Since this formula is derived from the approximation of a one-dimensional integral operator, Theorem \ref{th-cluster} allows to conclude the proof.
\hfill $\bullet$
}

\begin{remark}\label{comput2}
{\rm Notice that Theorem \ref{th:main3} can be alternatively proven using part 1 of Theorem \ref{lem1} with $p=2$, thanks to the explicit expression $\Delta\tau_i/(1+\Delta\tau_i)^{i-j+1}$; see \eqref{eq:perm_L}.}
\end{remark}

\begin{theorem}\label{th:main4}
    The sequence of the radiative transfer operators $\{A_N\}_N=\{I\!d_N-\Lambda_N\Sigma_N\}_N$, c.f. \eqref{eq:A_N}, is distributed as the identity. More precisely, we have
    $$\{A_N-I\!d_N\}\sim_\sigma 0,\qquad  \{A_N-I\!d_N\}\sim_\lambda 0,$$
which means that the weak clusterings hold. Furthermore, under the assumption that the function $\gamma=\sigma/2\chi$ is bounded,  we deduce a strong eigenvalue cluster and singular value at $1$.
\end{theorem}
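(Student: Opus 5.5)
The plan is to write $A_N - I\!d_N = -\Lambda_N\Sigma_N$ and show that this product is zero-distributed, first in the singular value sense and then in the eigenvalue sense. Strong clustering at $1$ for $A_N$ is equivalent to strong clustering at $0$ for $-\Lambda_N\Sigma_N$, because the eigenvalues of $A_N$ are $1-\lambda_i(\Lambda_N\Sigma_N)$ and, by Weyl-type interlacing for rank-bounded plus norm-small perturbations, the singular values behave in the same way.

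\textbf{Weak clustering (singular values).} By Theorem~\ref{th:main3}, $\{\Lambda_N\}\sim_\sigma 0$, so $\{\Lambda_N\}\in\mathcal O$. By Theorem~\ref{th:main1}, $\{\Sigma_N\}\sim_\sigma 0$, and Proposition~\ref{d->su - prop 5.4} gives $\{\Sigma_N\}\in\mathcal{SU}$. Theorem~\ref{th: zero-distr ideal of s.u. *-algebra} says $\mathcal O$ is a two-sided ideal of $\mathcal{SU}$, so $\{\Lambda_N\Sigma_N\}\in\mathcal O$. Multiplying by $-1$ preserves this (Theorem~\ref{th:s.u. *-algebra}), hence $\{A_N-I\!d_N\}\sim_\sigma 0$. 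The permutations $P,P^T$ and, for $d>1$, the interpolation operators $T_{\mathrm{CR}},T_{\mathrm{RC}}$ are uniformly bounded in spectral norm. Uniformly bounded sequences are s.u., so by the same ideal property they leave zero-distribution unchanged.

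\textbf{Weak clustering (eigenvalues).} Here I would use part 2 of Theorem~\ref{lem1} rather than the GLT machinery. The product is non-Hermitian, so Proposition~\ref{lem} does not apply directly. For a zero-distributed sequence, $\|\Lambda_N\Sigma_N\|_{S,p}^p=o(d_N)$ follows once the norms are uniformly bounded; it is enough to have the weak singular value cluster and a uniform bound $\|\Lambda_N\Sigma_N\|\le C$. The uniform bound comes from $\|\Lambda_N\|=O(1)$ and from the standing assumption that $\Gamma_N\Psi_N$ is bounded in spectral norm (with $\|W_N\|=O(1/N_r)$-scaled weights). I expect this to be the main obstacle. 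If $\gamma$ is unbounded, I would fall back on the low-rank plus small-norm splitting of Proposition~\ref{s.u.pro}: discard an $o(d_N)$-rank part and apply Theorem~\ref{lem1} part 2 to the bounded remainder. Eigenvalue counts change by at most the rank, which is $o(d_N)$, so weak eigenvalue clustering is preserved.

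\textbf{Strong clustering when $\gamma$ is bounded.} Theorem~\ref{th:main2} gives $\|\Sigma_N\|=O(1)$ and $\|\Sigma_N\|_{S,2}=O(1)$. For $\Lambda_N$, Remark~\ref{comput2} gives $\|\Lambda_N\|_{S,2}=O(1)$, and the entries $\Delta\tau_i/(1+\Delta\tau_i)^{i-j+1}$ yield $\|\Lambda_N\|_{\infty}=O(1)$ through geometric-series row and column sums. Hölder's inequality for Schatten norms then gives
\[
\|\Lambda_N\Sigma_N\|_{S,1}\le\|\Lambda_N\|_{S,2}\|\Sigma_N\|_{S,2}=O(1).
\]
Alternatively, $\|\Lambda_N\Sigma_N\|_{S,2}\le\|\Lambda_N\|\,\|\Sigma_N\|_{S,2}=O(1)$. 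Part 1 of Theorem~\ref{lem1} then gives a strong cluster at $0$ for both singular values and eigenvalues of $\Lambda_N\Sigma_N$. Translating back, the eigenvalues of $A_N$ are strongly clustered at $1$. The singular values of $A_N$ are also strongly clustered at $1$, via the decomposition $\Lambda_N\Sigma_N=$ (rank $\le r_\epsilon$) $+$ (norm $\le\epsilon$) from Remark~\ref{comput} and the interlacing of singular values of $I\!d_N+E$ under finite-rank and small-norm perturbations.
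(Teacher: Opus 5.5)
\textbf{Verdict.} Your weak singular-value argument (the ideal $\mathcal{O}$ inside $\mathcal{SU}$) and your route to the strong cluster (via Theorems~\ref{th:main2} and~\ref{th:main3}) are the paper's. For $\{A_N-I\!d_N\}\sim_\lambda 0$ the paper gives no separate argument; in the bounded case it only follows from the strong cluster. Your main line instead derives it from the weak singular-value result plus a uniform spectral bound, via part~2 of Theorem~\ref{lem1}. That is an improvement, because it does not depend on the Schatten bounds discussed in the second paragraph.

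\textbf{The gap: your fallback for unbounded $\gamma$.} The claim that eigenvalue counts change by at most the rank of the discarded term is Cauchy interlacing, which holds only for Hermitian matrices. For non-normal matrices a rank-one perturbation can move every eigenvalue. Let $Z_n$ be the nilpotent down-shift of size $n$ and set $X_n=\delta_n Z_n+\delta_n^{1-n}e_1e_n^T$ with $\delta_n\to 0$. The first term has norm $\delta_n$ and only zero eigenvalues, and the second has rank one. Yet $X_n^n=I\!d_n$, so all eigenvalues of $X_n$ lie on the unit circle. Since $X_n$ is a weighted cyclic permutation, its singular values are $\delta_n$ (multiplicity $n-1$) and $\delta_n^{1-n}$. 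So $\{X_n\}$ is s.u.\ with $\{X_n\}\sim_\sigma 0$, but it is not zero-distributed in the eigenvalue sense. Hence s.u.\ plus $\sim_\sigma 0$ cannot by itself give the eigenvalue statement. There is a second problem: Proposition~\ref{s.u.pro} only gives a part of rank at most $r(M)d_N$ and a remainder of norm at most $M$, with $r(M)\to 0$ only as $M\to\infty$. So for fixed $M$ the remainder need not be zero-distributed, and Theorem~\ref{lem1} does not apply to it. This matches Remark~\ref{comput3}, which keeps only the singular-value statement when $\gamma$ is unbounded. You have two options:
\begin{enumerate}
\item Restrict the $\sim_\lambda$ claim to the uniformly bounded setting, where your main line suffices and the fallback is unnecessary.
\item Replace the fallback by Weyl's inequality $\prod_{i\le k}|\lambda_i|\le\prod_{i\le k}\sigma_i$ (both ordered decreasingly). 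This gives the weak eigenvalue cluster provided $\sum_{i\le s_N}\log^+\sigma_i(\Lambda_N\Sigma_N)=o(d_N)$ for every $s_N=o(d_N)$.
\end{enumerate}

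\textbf{A second, inherited issue.} This one comes from the paper, not from you, but it sits in the step you make explicit. The bounds $\|\Lambda_N\|_{S,2}=O(1)$ and $\|\Sigma_N\|_{S,2}=O(1)$, taken from Remark~\ref{comput2} and Theorem~\ref{th:main2}, fail when both $N_s$ and $N_r$ grow.
\begin{itemize}
\item Up to permutation, $\Lambda_N$ is a direct sum of $N_r$ blocks. Each block is a discretized one-dimensional transfer operator whose Frobenius and spectral norms stay bounded away from zero, for rays of non-vanishing optical thickness. So $\|\Lambda_N\|_{S,2}^2\gtrsim N_r$, and $\Lambda_N$ has a number of order $N_r$ singular values above a fixed threshold.
\item Likewise $\Sigma_N$ is a direct sum of $N_s$ blocks, so $\|\Sigma_N\|_{S,2}^2\gtrsim N_s$ unless $\gamma\Phi$ vanishes almost everywhere.
\end{itemize}
Your H\"older estimate therefore only gives $O(\sqrt{N_rN_s})$, and $\|\Lambda_N\|\,\|\Sigma_N\|_{S,2}$ only gives $O(\sqrt{N_s})$. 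A sound proof of the strong cluster at $1$ needs a direct estimate on the product $\Lambda_N\Sigma_N$ itself, either of Schatten type or via collective compactness. The reason is that on the full phase space it is the composition $\mathcal{T}\mathcal{S}$, not each factor separately, that behaves like a compact operator.
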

{\bf Proof:}\  using the previous results, we have $\{\Sigma_N\}\sim_\sigma 0$ and $\{\Lambda_N\}_N\sim_\sigma 0$ so that
$\{\Lambda_N\Sigma_N\}, \ \{A_N-I\!d_N\}\sim_{\sigma} 0$, because $A_N-I\!d_N=-\Lambda_N\Sigma_N$ and using Theorem \ref{th: zero-distr ideal of s.u. *-algebra}.
The second part regarding the strong eigenvalue cluster and singular value at $1$ is a plain consequence of Theorem \ref{th:main2} and of Theorem
\ref{th:main3}.
\hfill $\bullet$

%[Si potrebbe anche menzionare (magari in un corollario) le ipotesi su $\Lambda$ che renderebbero il teorema vero.]}

\section{Numerical experiments}\label{sec:exp}
For the experimental analysis, we start considering the plane-parallel, cylindrically-symmetric model in \eqref{RT_tau}, since computing eigenvalues is challenging for full 3D models \cite{stamnes1988computation}. We present a more realistic 3D experimental setup in Section~\ref{sec:3d}. We consider various assumptions in the description of scattering processes, but we remark that the theoretical results hold also in the case of the most general scattering kernel, i.e. for dipole scattering with angle-dependent PRD effects \cite{riva2025,guerreiro2024modeling}.

For the discrete spatial grid, we consider an equidistant partition of $[t_{\mathrm{surf}},t_{\mathrm{deep}}]=[0,1]$, with $\Delta t=1/(N_s-1)$, where $t$ is the frequency-integrated optical depth scale along the vertical. %, i.e., ${\mathrm d}t = −[\kappa(z) + \sigma(z)]{\mathrm d}z$}.
The optical depth for a given direction and frequency is therefore: $\tau(\mu,\nu)=\phi(\nu)t/\mu$, where we assume that $\phi$ does not vary in space and direction.
As a common setting for all 1D experiments, we set the following boundary conditions in \eqref{RT_tau}: $I_\mathrm{in}(t_{\mathrm{deep}},\nu) = 1$ and $I_\mathrm{in}(t_{\mathrm{surf}},\nu) = 0$, for all $\nu\in F$. %and the emissivity source term\cluca{[!]} to zero, i.e. $\varepsilon_t=0$ in \eqref{eq:emissivity}. \cluca{Consistenty with this last assumption, we also set $\kappa=0$.} 
%Both these choices do 
This choice does not impact the numerical tests of radiative transfer operators spectra, since boundary conditions and source terms appear only on the right-hand side of the discrete radiative transfer problem, cf. Equation \eqref{eq:RT_sys_algebraic}. 
%
%For the discrete spatial grid, we consider an equidistant partition of $[\tau_{\mathrm{surf}},\tau_{\mathrm{deep}}]=[0,1]$, with $\Delta\tau=1/(N_s-1)$. 
For the angular discretization, we use two sets of Gauss-Legendre nodes and weights, $N_\Omega/2$ for the interval $[-1,0)$ and $N_\Omega/2$ for $(0,1]$. In terms of frequency discretization, when present, we consider the reduced frequencies\footnote{A reduced frequency is non-dimensional distance from a reference value (line-center), normalized to a factor proportional to the width of the absorption profile. Reduced frequencies are usually denoted with the letter $u$. To avoid complicating notation, we use the letter $\nu$ also for reduced frequencies.} interval $F=[-10,10]$, and perform numerical integration using the trapezoidal rule.

In the following sections, we test numerically the eigen-distribution of radiative transfer operators and the convergence iterative solvers, for different scattering kernels, in view of the theoretical results presented in Section~\ref{sec:symbol}.
For the iterative solution of 1D radiative transfer systems, we use both GMRES (with no restart) and BiCGStab methods and
 the constant right hand side $\mathbf{b}=\mathbf{1}$, without explicitly setting the source term $\varepsilon_t$.

\subsection{Monochromatic problem}
First, we consider the monochromatic version of \eqref{eq:RT}, with the frequency domain being a single frequency, %i.e. $F=\{\nu_0\}$, 
and a frequency-independent scattering kernel depending only on the incoming and outgoing directions, i.e.
\begin{equation*}
    \Phi(\mu,\mu')=\sum_{\ell=0}^Ld_\ell P_\ell(\mu)P_\ell(\mu'),
\end{equation*}
where $P_\ell(\mu)$ is the Legendre polynomial of order $\ell$. 
We note that the corresponding discrete operator $\Psi_k$ will have rank $L+1$ for any $k=1,\ldots,N_s$.
We consider the following coefficients with $L=7$ from \cite{pontaza2005least}:
$\{d_0,d_1,\ldots,d_7\}=\{1,1.98398,1.50823,0.70075,\\0.23489,0.05133,0.00760,0.00048\}.$
With this assumption, since different frequencies are decoupled, we can replace \eqref{RT_tau} by its monocromatic version:
\begin{equation}\label{eq:mono_sol}
\frac{\mathrm{d}}{\mathrm{d}\tau}
	I(\tau,\mu)  = I(\tau,\mu) - \frac{1-\mu\tau}{2}\int_{-1}^1\Phi(\mu,\mu')I(\tau,\mu')\,\mathrm{d}\mu',
\end{equation}
where we have reported $S$ explicitly, set $\phi=1$, and $\gamma(t)=(1-t)/2=(1-\mu\tau)/2$, c.f. Equation~\eqref{eq:S}. 
An example of the solution of \eqref{eq:mono_sol} is given in Figure~\ref{fig:mono}.

\begin{figure}
    \centering
\includegraphics[width=\linewidth]{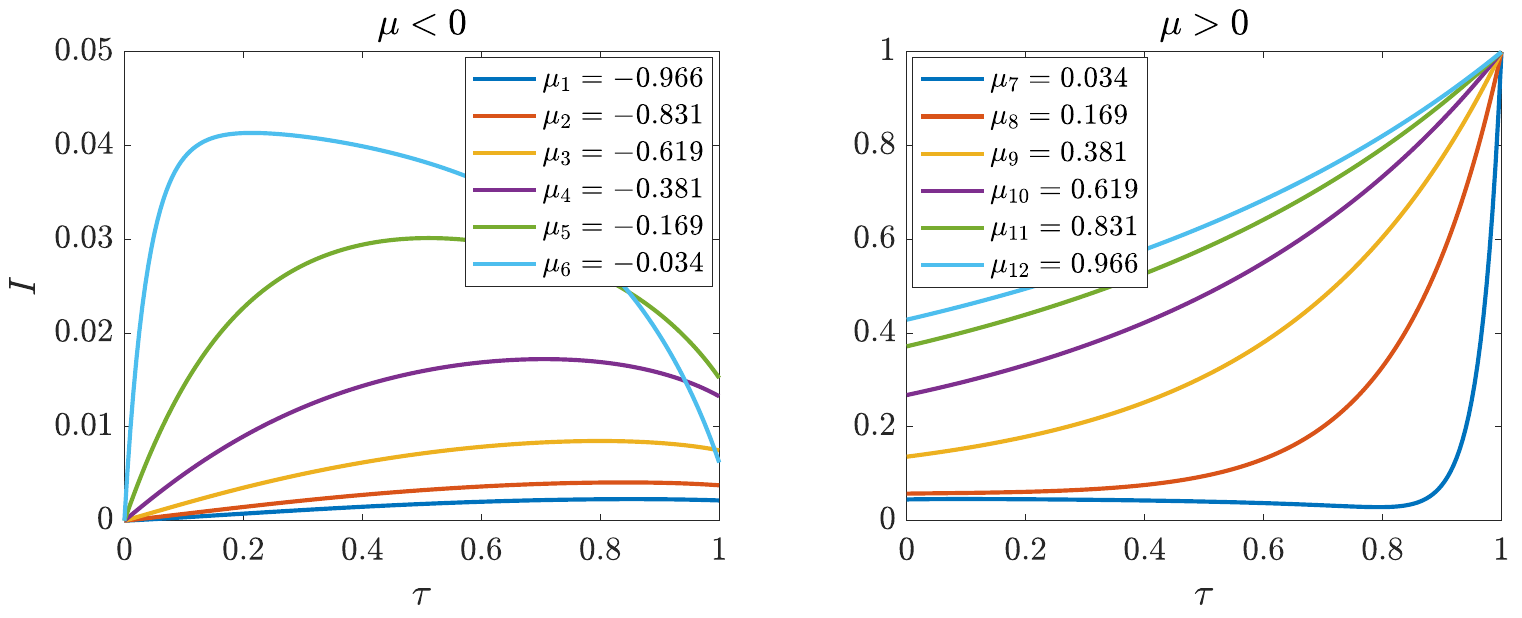}
    \caption{Monochromatic solution of \eqref{eq:mono_sol} discretized with $N_s=200$ and $N_\Omega=N_r=12$, with a discontinuity for $\mu=0$.}
    \label{fig:mono}
\end{figure}

Spectrum inspection reveals a strong clustering of eigenvalues around one. In Table~\ref{tab:mono}, we show the variation in the number of eigenvalues with absolute value in the interval [0.999,1] w.r.t. the discretization parameters. As expected by the theory, eigenvalues accumulate at one under refinement. We remark that non-zero (even if relatively small) imaginary parts are present.
We further note that $|\lambda|_{\min}>0.82$ in all cases, signaling a lower bound in the spectrum and the absence of outliers possibly close to zero.  As expected, these spectral properties correspond to fast and robust convergence of Krylov methods, even without preconditioning, cf. Fig.~\ref{fig:mono_conv}.

\begin{figure}[]
  \centering
  \begin{minipage}[c]{0.5\textwidth}
    \centering
    \scalebox{0.8}{
    \begin{tabular}{cc|cccc}
%\toprule
& & \multicolumn{4}{c}{$N_\Omega$} \\
& & 12 & 24 & 50 & 100\\
\midrule
\multirow{6}{*}{$N_s$}
& 10  & 68.3\% & 84.2\% & 91.8\% & 96.2\% \\
& 20  & 69.6\% & 84.8\% & 92.3\% & 96.4\% \\
& 40  & 73.5\% & 87.8\% & 93.1\% & 96.8\% \\
& 100 & 79.8\% & 90.1\% & 94.1\% & 97.6\% \\
& 200 & 86.6\% & 93.8\% & 95.4\% & 98.5\%
%\bottomrule
\end{tabular}
}
    \captionof{table}{Percentage of eigenvalues with absolute value in the interval [0.999,1] for the monochromatic problem.}
    \label{tab:mono}
  \end{minipage}
  \hfill
  \begin{minipage}[c]{0.48\textwidth}
    \centering
    \includegraphics[width=\textwidth]{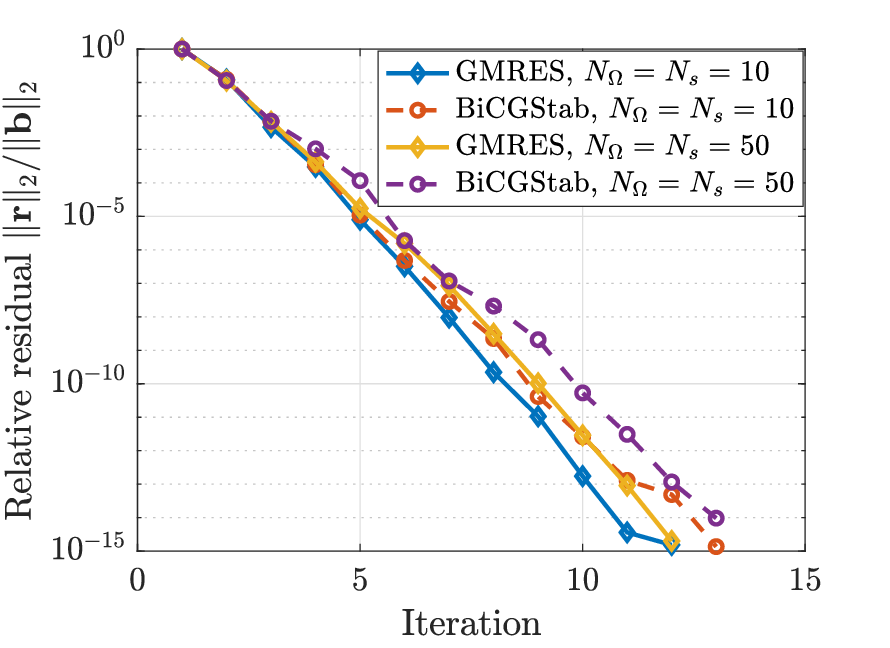}
    \caption{GMRES and BiCGStab convergence for different discretization parameters for the monochromatic problem.}
    \label{fig:mono_conv}
  \end{minipage}%
\end{figure}

\subsection{Isotropic problem}
Secondly, we consider the case of isotropic scattering, i.e. in the form 
\begin{equation}    S(\tau,\mu,\nu)=\gamma(\tau,\mu,\nu)\int_F\Phi(\nu,\nu')\int_{-1}^1I(\tau,\mu',\nu')\,\mathrm{d}\mu'\mathrm{d}\nu',
\end{equation}
In the case of coherent scattering, we have $\Phi(\nu,\nu')=\phi(\nu')\delta(\nu-\nu')$ with $\phi$ a normalized absorption profile, e.g. a Voigt function, obtaining an source function in the form
\begin{equation*}
S_{\mathrm{COH}}(\tau,\mu,\nu)=\gamma(\tau,\mu,\nu)\phi(\nu)\int_{-1}^1I(\tau,\mu',\nu)\,\mathrm{d}\mu'.
\end{equation*}
For the complete redistribution (CRD) case, we have instead $\Phi(\nu,\nu')=\phi(\nu')\phi(\nu)$, obtaining
\begin{equation}
S_{\mathrm{CRD}}(\tau,\nu)=\gamma(\tau,\mu,\nu)\phi(\nu)\int_F\phi(\nu')\int_{-1}^1I(\tau,\mu',\nu')\,\mathrm{d}\mu'\mathrm{d}\nu',
\end{equation}
where photons are redistributed across the line profile.
For the numerical experiments, we set the coefficient

$$\gamma(\tau,\mu,\nu)\phi(\nu)=\frac{1}{2}(1 - \tau\mu/\phi(\nu))=\frac{1}{2}(1-t)=\frac{\sigma(t)\phi(\nu)}{2\chi(t,\nu)},$$ 
corresponding to $\sigma(t)/(\kappa(t)+\sigma(t))=1-t$, and use the following absorption profile 
\begin{equation*}
    \phi(\nu)=\frac{1}{\pi}\frac{1}{\nu^2+1}.
\end{equation*}
We refer to Figures~\ref{fig:iso_sol}-\ref{fig:CRD_sol} for an illustration of the solution in the case of isotropic scattering, in the case of coherent and CRD scattering respectively.
\begin{figure}
    \centering
    \includegraphics[width=\linewidth]{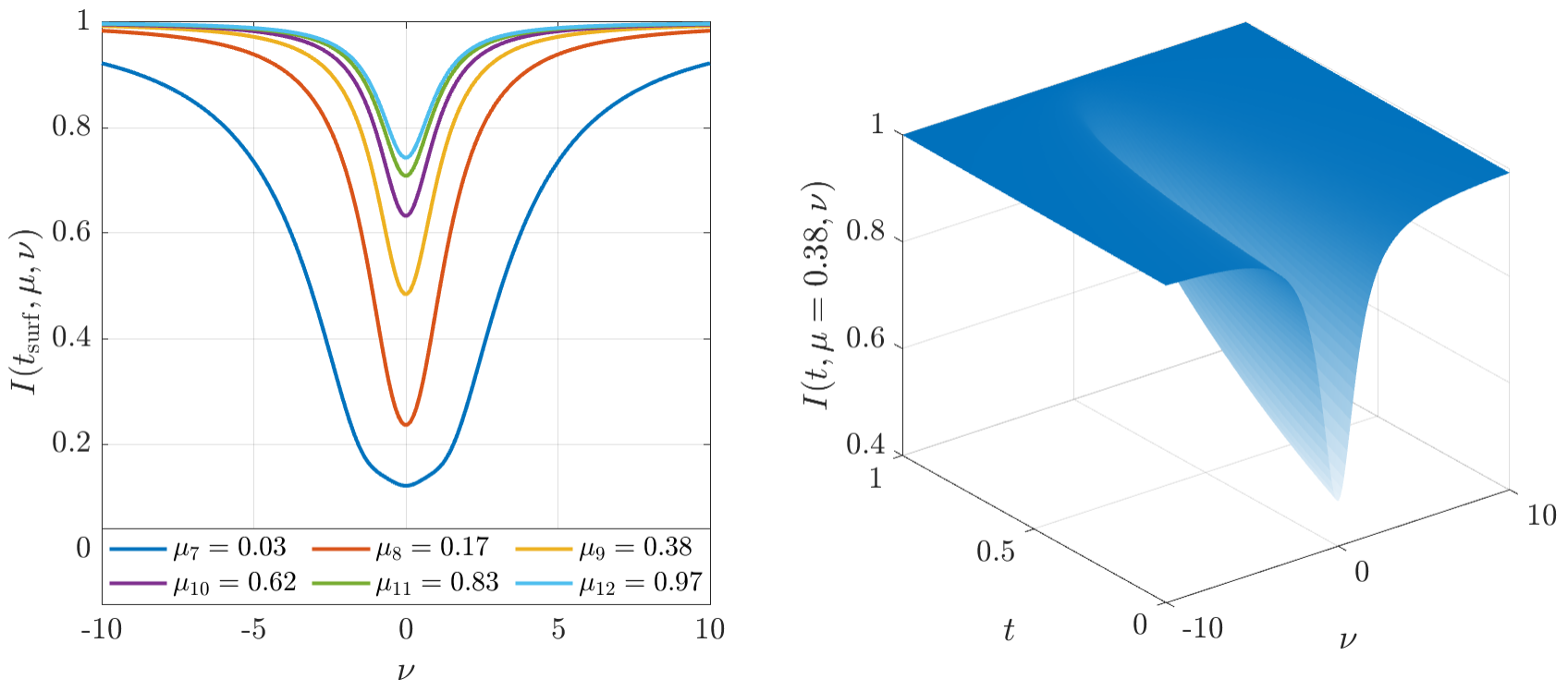}
    \caption{Isotropic solution using coherent scattering, obtained with $N_s=N_{\nu}=200$ and $N_\Omega=12$. Left: emerging radiation profiles at $t_{\mathrm{surf}}=\tau_{\mathrm{surf}}=0$ for various directions $\mu$. Right: Solution profile for $\mu=0.38$ in the full spatial domain.}
    \label{fig:iso_sol}
\end{figure}
\begin{figure}
    \centering
    \includegraphics[width=\linewidth]{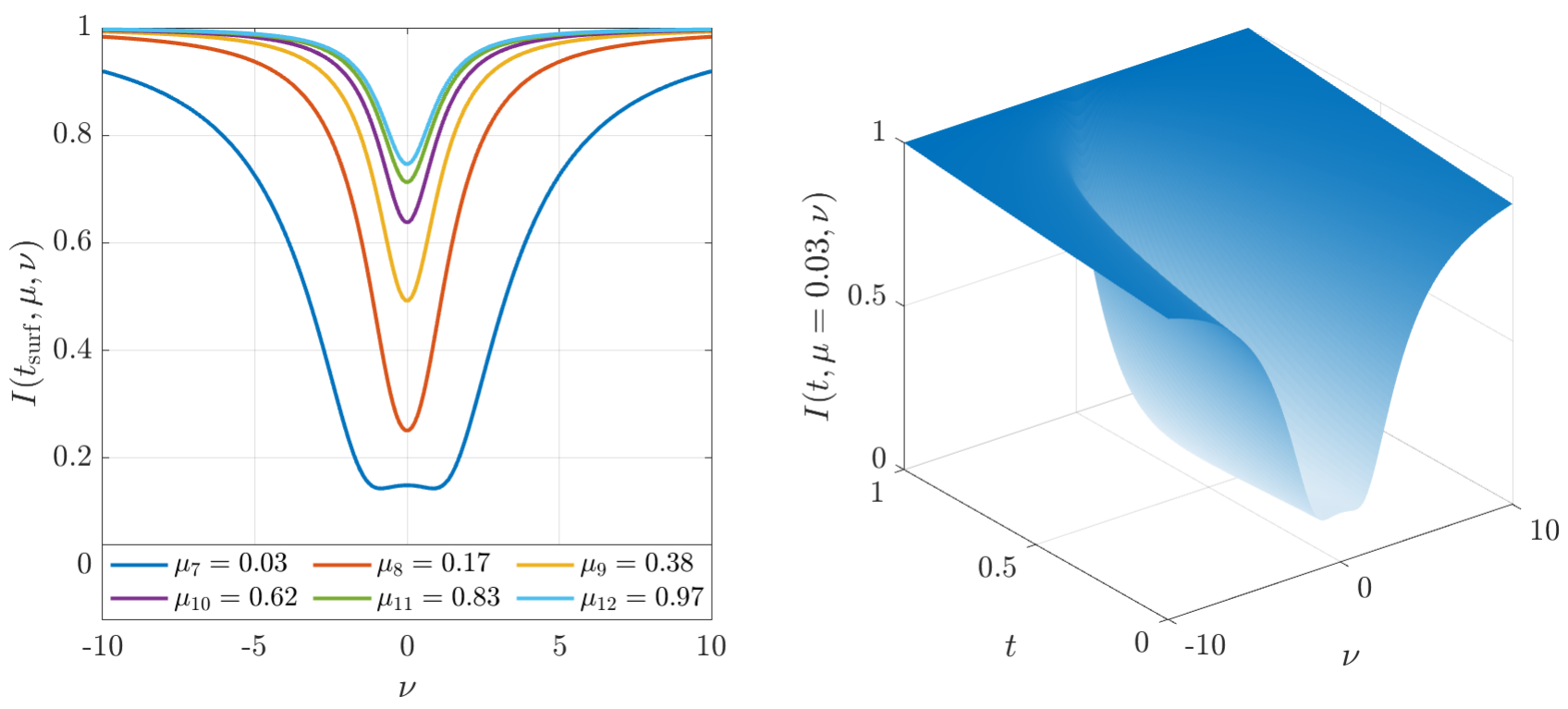}
    \caption{Isotropic CRD solution, obtained with $N_s=N_{\nu}=200$ and $N_\Omega=12$. Left: emerging radiation profiles at $t_{\mathrm{surf}}=\tau_{\mathrm{surf}}=0$ for various directions $\mu$. Right: Solution profile for $\mu=0.0.03$ in the full spatial domain.}
    \label{fig:CRD_sol}
\end{figure}
In the isotropic case, the spectrum inspection also reveals a strong clustering of eigenvalues around one. In Table~\ref{tab:iso}, we show the variation in the number of eigenvalues with absolute value in the interval [0.999,1] w.r.t. the discretization parameters in the case of coherent scattering. In the CRD case, we find an even stronger clustering, with all the corresponding entries exceeding $99.2\%$. In both cases, as expected by the symbol theory, the eigenvalues accumulate at one under refinement. We remark that non-zero (even if relatively small) imaginary parts are present in the coherent case.
We further note that $|\lambda|_{\min}>0.70$ in all cases, signaling a lower bound in the spectrum and the absence of outliers possibly close to zero.  Again, these spectral properties correspond to fast and robust convergence of Krylov methods, as shown in Fig.~\ref{fig:iso_conv} for the coherent case. The GMRES solver, for example, reaches machine precision in 3 more iterations (from 8 to 11), when we refine homogeneously in all dimensions, increasing the total number of degrees of freedom $N$ by a factor of 125.
\begin{figure}[]
  \centering
  \begin{minipage}[c]{0.4\textwidth}
    \centering
    \scalebox{0.9}{
    \begin{tabular}{cc|ccc}
%\toprule
& & \multicolumn{3}{c}{$N_\nu$} \\
& & 10 & 20 & 50 \\
\midrule
\multirow{6}{*}{$N_s$}
& 10  & 98.3\% & 97.9\% & 97.9\% \\
& 20  & 98.4\% & 98.4\% & 98.3\%  \\
& 50  & 98.6\% & 98.7\% & 98.7\%  \\
& 100 & 98.9\% & 99.1\% & 99.2\%
%\bottomrule
\end{tabular}
}
    \captionof{table}{Percentage of eigenvalues with absolute value in the interval [0.999,1] using $N_\mu=12$.}
    \label{tab:iso}
  \end{minipage}
  \hfill
  \begin{minipage}[c]{0.55\textwidth}
    \centering
    \includegraphics[width=\textwidth]{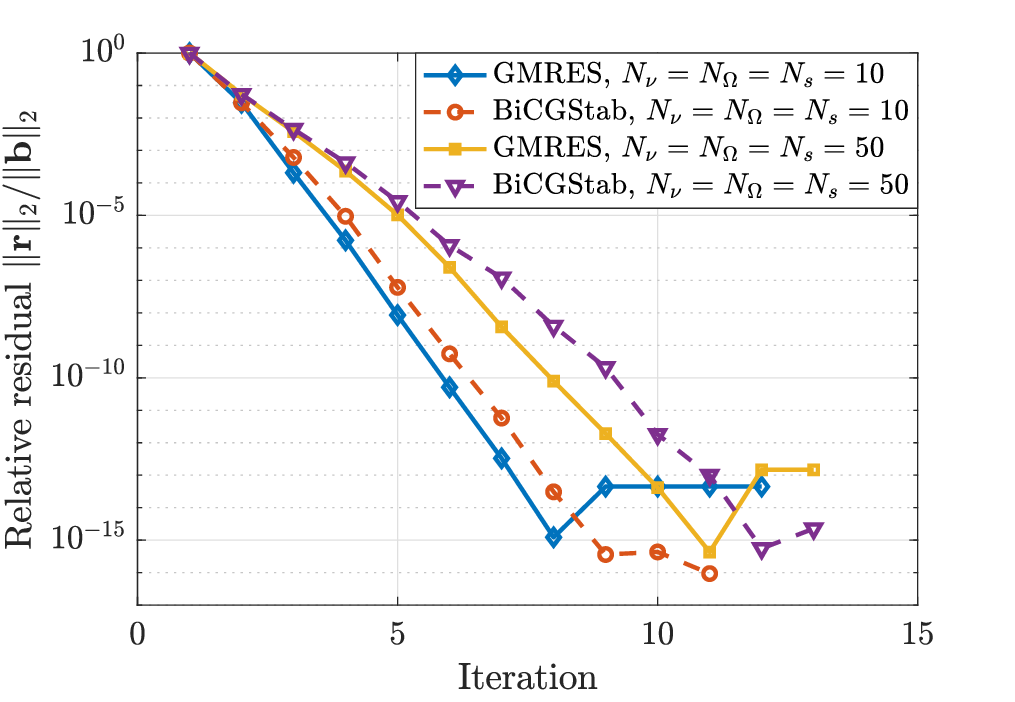}
    \caption{GMRES and BiCGStab convergence for different discretization parameters (from $N=10^3$ to $N=50^3$ degrees of freedom) in case of coherent scattering.}
    \label{fig:iso_conv}
  \end{minipage}%
\end{figure}

\subsection{Realistic 3D problem}\label{sec:3d}
We now consider a realistic RT problem, stemming from an astrophysical application: the synthesis of spectral lines in stellar atmospheres. 
We remark that the spectroscopic observation of remote objects, such as the Sun, is the backbone of remote sensing in astrophysical research. 
In particular, the simulation of line-formation processes, via the solution of a RT problem, is crucial in inferring the physical properties of far-away objects (given the impossibility to directly probe them).
Stellar atmospheres are usually highly inhomogeneous and anisotropic: to accurately model how their properties are encoded in the emitted radiation, it is necessary to consider three-dimensional models that resolve their smallest structures. The resulting RT problems can have a very large number of degrees of freedom (easily exceeding one billion), cf. Figure~\ref{fig:sun}, making high-resolution stellar RT simulations extremely challenging. 

For our final benchmark, we synthesize the resonance line of neutral calcium at 4227\,{\AA} in a 3D model extracted from the R-MHD simulation of \cite{carlsson2016publicly}, which includes the solar atmosphere, from the upper part of the convection zone to the corona. 
%, which captures the structure of the solar photosphere and chromosphere. 
We refer to \cite{benedusi2026rt3d} for the detailed description of the considered model and the boundary conditions. 
The domain has a spatial extension of $6\times6\times2.6$ Mm$^3$ and is discretized with a structured grid with $N_x\times N_y \times N_z$ grid points. The angular grid is a tensor product in azimuth (equidistant nodes) and inclinations (Gauss-Legendre nodes) with $N_\Omega$ total directions and $N_\nu$ frequencies in the interval $F=[4221.6,4232.7]$~{\AA}.
%We use a complete frequency redistribution scattering kernel, considering a two-level atomic model, as described in \cred{CITE}.
We consider a two-level atomic model for neutral calcium, and a dipole scattering kernel describing complete frequency redistribution, as described in \cite{benedusi2026rt3d}.
The atmospheric model provides multiple physical quantities (temperature, density, magnetic field, plasma velocity, etc) which enter the expression of the scattering kernel $\Phi$ and in the physical coefficients $\chi$ and $\sigma$ in \eqref{eq:RT}-\eqref{eq:emissivity}; since those expressions are rather convoluted and not relevant for this work, we refer to \cite{riva2023assessment} for such expressions. 
In the 3D polychromatic setting, $N$ can easily exceed one billion (with dense coupling induced by the scattering integral) and the computation of $A_N$ eigenvalues becomes unfeasible; for this reason, we rely on the robustness of GMRES to evaluate spectral clustering.
The solution is obtained with TRIP\footnote{\url{https://github.com/pietrobe/TRIP}}, which uses a distributed matrix-free approach. The computation of transfer, i.e. the evaluation of $\Lambda$, is obtained with a tailored quadratic exponential integrator, using both short- and long-characteristics, depending on the ray inclination \cite{benedusi2023scalable} and a highly accurate quadrature for scattering \cite{riva2023assessment}. TRIP uses the PETSc GMRES implementation, for which we set a relative tolerance of $10^{-6}$ and 30 iterations as restart.
We run numerical experiments with the MareNostrum 5 supercomputer\footnote{\url{https://www.bsc.es/marenostrum/marenostrum-5}}, using up to 12288 cores of the General Purpose Partition (GPP).

 In table~\ref{tab:stellar}, we report the GMRES iterations needed to converge, varying the discretization parameters along different dimensions, with a problem size varying from $N=3.6\cdot10^7$ to $N=6.5\cdot10^9$. Again, the convergence is robust with respect to all the discretization parameters, as expected from the spectral results.

\begin{figure}
\centering
\begin{tikzpicture}
  % image node
  \node[anchor=south west, inner sep=0] (img) 
    {\includegraphics[width=\linewidth]{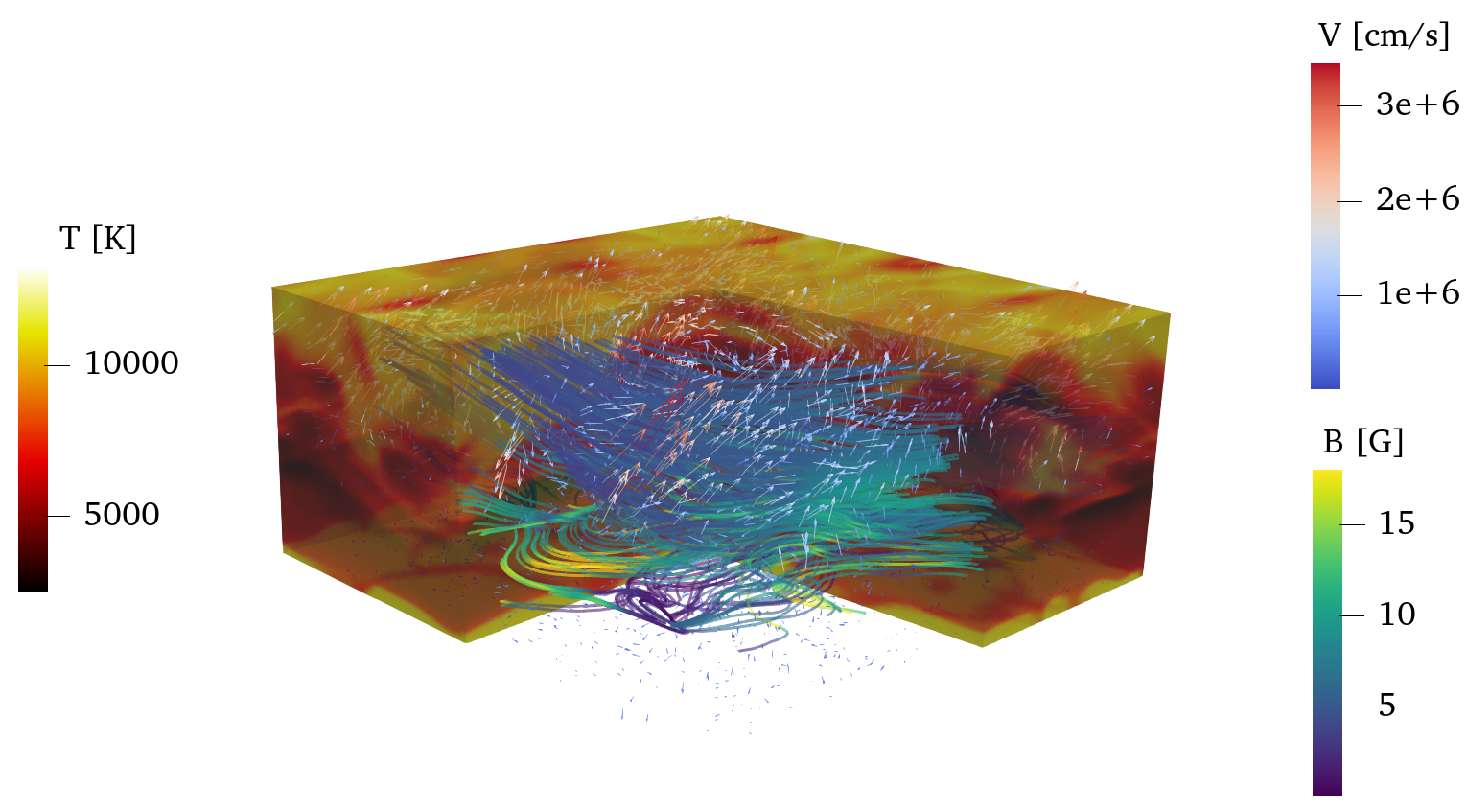}};

  % coordinate system relative to the image
  \begin{scope}[x={(img.south east)}, y={(img.north west)}]

    % origin
    \coordinate (O) at (0.1,0.1);

    % axes
    \draw[->, thick] (O) -- ++(0.04,-0.06) node[right] {$x$};
    \draw[->, thick] (O) -- ++(0.05,0.05) node[above right] {$y$};
    \draw[->, thick] (O) -- ++(0,0.12) node[above] {$z$};

  \end{scope}
\end{tikzpicture}
\caption{3D solar atmosphere model, showing a temperature scalar field and the magnetic and plasma velocities vector fields (streamlines and arrows respectively).}
\label{fig:sun}
\end{figure}

\begin{table}[]
    \centering
    \begin{tabular}{c|c|c|c||c|c|c||r}
         $N_x=N_y$ & $N_z$ & $N_\Omega$ & $N_\nu$ &  $N_s$ & $N_r$ & $N$ & Iterations\\
         \hline
         \hline
          16 & 34  & 64 & 64  & 8704 & 4096  & $3.6\cdot10^7$ & 57\\
          16 & 34  & 64 & 96  & 8704 & 6144  & $5.3\cdot10^7$ & 55 \\
          16 & 34  & 128 & 64 & 8704 & 8192  & $7.1\cdot10^7$ & 57\\
          16 & 34  & 128 & 96 & 8704 & 12288 & $1.1\cdot10^8$ & 55\\
         \hline
          32 & 68 & 64 & 64  & 69632 & 4096  & $2.3\cdot10^8$ & 56\\
          32 & 68 & 64 & 96  & 69632 & 6144  & $4.3\cdot10^8$ & 54\\
          32 & 68 & 128 & 64 & 69632 & 8192  & $5.7\cdot10^8$ & 57\\
          32 & 68 & 128 & 96 & 69632 & 12288 & $8.6\cdot10^8$ & 54\\
         \hline
         63 & 134 & 64 & 64  &  531846 & 4096  & $2.2\cdot10^9$ & 56\\
         63 & 134 & 64 & 96  &  531846 & 6144  & $3.3\cdot10^9$ & 56\\
         63 & 134 & 128 & 64 &  531846 & 8192  & $4.4\cdot10^9$ & 56\\
         63 & 134 & 128 & 96 &  531846 & 12288 & $6.5\cdot10^9$ & 54\\
    \end{tabular}
    \caption{Number of GMRES iterations to convergence for the 3D problem for different degrees of freedom, varying in each dimensions. We recall that $N_s=N_x\cdot N_y\cdot N_z$ is the number of spatial DoFs, $N_r=N_\Omega\cdot N_\nu$ is the number of rays, and $N=N_s\cdot N_r$ is the total number of degrees of freedom.}
    \label{tab:stellar}
\end{table}

\section{Conclusions}\label{sec:concl}
In the current study, we considered a general discretized linear radiative transfer problem and reported the corresponding matrix structures. We proposed a theoretical analysis of the related matrix sequences, explaining the formal reasons why the use of Krylov methods is effective and robust, despite the possibly dense nature of the involved matrices. In short, the compactness of the continuous operators used in the modeling leads to zero-clustered dense matrix sequences plus identity, so that it was possible to formally deduce the clustering at the unity in the spectra of radiative transfer operators under mild assumptions.
As expected from theory, unpreconditioned Krylov methods exhibit robustness with respect to all discretization parameters; in this sense, this setting effectively provides a ``free lunch''. We remark that preconditioning can still be effective to reduce iteration count (e.g. physics-based strategies \cite{janett2024numerical}), even if robustness is already ensured.

We reported a wide set of numerical experiments of increasing modeling complexity, which supports the theoretical result.
Although we use specific discretization settings, we expect the theoretical results and the numerical behavior to hold more broadly because of the underlying compactness of the involved continuous operators. Robustness with respect to the physical parameters can be expected, but remains possibly subject of future investigation, as for the impact of the problem geometry.
As a final note, we did not consider the polarization of radiation in this work, in order to limit notational and theoretical complexity. Nevertheless, the inclusion of polarization would not alter the mathematical structure of the resulting discrete problem, which would simply become vector-valued. Indeed, numerical experiments indicate that robustness \cite{benedusi2021numerical,janett2021numerical} and optimal scaling \cite{benedusi2023scalable} are also present in the polarized case.

This work demonstrates that radiative transfer (RT) simulations can achieve optimal scaling with respect to discretization parameters. This makes high-resolution configurations feasible; such large-scale simulations (with tens of billions of degrees of freedom, see i.e. \cite{benedusi2026rt3d}) are essential for accurate remote sensing applications, for example in the astrophysical context.

\subsection{Acknowledgment}
This work was financed by the Swiss National Science Foundation (SNSF) through the grant 200021-231308. The authors thankfully acknowledge the resources provided by the Barcelona Supercomputing Center through project no. AECT-2024-2-0018, and the help of Tanaus\'u del Pino Alem\'an in processing the 3D atmospheric model.

% \bibliographystyle{siamplain}
% \bibliography{references}

\end{document}